\newtheorem{thmInt}{Theorem}[section]
\newaliascnt{prop}{thm}
\newtheorem{prop}[prop]{Proposition}
\newaliascnt{lem}{thm}
\newtheorem{lem}[lem]{Lemma}
\newaliascnt{cor}{thm}
\newtheorem{cor}[cor]{Corollary}
\theoremstyle{definition}
\newaliascnt{definition}{thm}
\newtheorem{definition}[definition]{Definition}
\newaliascnt{remark}{thm}
\newtheorem{remark}[remark]{Remark}
\newaliascnt{ex}{thm}
\newtheorem{ex}[ex]{Example}
\numberwithin{equation}{section}
\DeclareMathOperator{\im}{im} 
\DeclareMathOperator{\Proj}{Proj} 
\newcommand{\iso}{\cong}
\newcommand{\id}{{\rm id}}
\newcommand{\st}{\,|\,} 
\newcommand{\mor}[1]{\xrightarrow{#1}} 
\newcommand{\comp}{\circ} 
\newcommand{\rest}[1]{|_{#1}} 
\newcommand{\so}{\mathcal{O}} 
\newcommand{\ring}[1]{\mathbb{#1}}
\newcommand{\K}{\Bbbk} 
\newcommand{\NN}{\ring{N}}
\newcommand{\ZZ}{\ring{Z}}
\newcommand{\FF}{\mathbb{F}}
\newcommand{\PP}{\mathbb{P}}
\newcommand{\cat}[1]{{\mathscr{#1}}} 
\newcommand{\scat}[1]{{\mathbf{#1}}} 
\newcommand{\opp}{^{\circ}} 
\newcommand{\End}{{\rm End}} 
\newcommand{\cone}[1]{\mathsf{C}(#1)} 
\newcommand{\farg}{-} 
\newcommand{\fMod}[1]{#1\text{-}\mathrm{mod}} 
\newcommand{\proj}[1]{#1\text{-}\mathrm{proj}} 
\newcommand{\gen}[1]{\langle#1\rangle} 
\newcommand{\sh}[2][1]{#2[#1]} 
\newcommand{\tri}[6]{#1\mor{#2}#3\mor{#4}#5\mor{#6}\sh{#1}}
\newcommand{\Pretr}{\mathrm{Pretr}} 
\newcommand{\C}[1]{\mathrm{C}_{\mathrm{dg}}(#1)} 
\newcommand{\ca}{\cat A}
\newcommand{\cb}{\cat B}
\newcommand{\ce}{\cat E}
\newcommand{\cs}{\cat S}
\newcommand{\ct}{\cat T}
\newcommand{\D}{\scat D} 
\newcommand{\Da}{\D^?}
\newcommand{\Db}{\D^b}
\newcommand{\Dbz}{\D_0^b}
\newcommand{\Kb}{\scat K^b}
\newcommand{\Hqe}{\scat{Hqe}}
\newcommand{\Hqep}{\Hqe^{\mathrm{p}}}
\newcommand{\He}{\scat{He}} 
\newcommand{\Ht}{\scat{Ht}} 
\newcommand{\Mor}{\scat{Mor}}
\newcommand{\E}{E}
\newcommand{\I}{I}
\newcommand{\J}{J}
\newcommand{\lotimes}{\mathbin{\mathop{\otimes}\limits^{\mathbb{L}}}} 
\newcommand{\R}{\mathbb{R}} 
\newcommand{\fun}[1]{\mathsf{#1}} 
\newcommand{\fE}{\fun{E}}
\newcommand{\fF}{\fun{F}}
\newcommand{\fG}{\fun{G}}
\newcommand{\fR}{\fun{R}}
\newcommand{\fS}{\fun{S}}
\newcommand{\dgCat}{\mathbf{dgCat}} 
\newcommand{\stable}[1]{\underline{#1}}
\date{\today}
\begin{document}

\title{New examples of non-Fourier-Mukai exact functors via non-isomorphic octahedra}

\author[A.~Canonaco]{Alberto Canonaco}
\address{A.C.: Dipartimento di Matematica ``F.\ Casorati''\\
Universit{\`a} degli Studi di Pavia\\
Via Ferrata 5\\
27100 Pavia\\
Italy}
\email{alberto.canonaco@unipv.it}
    
\author[M.~Ornaghi]{Mattia Ornaghi}
\address{M.O.: Dipartimento di Matematica ``F.~Enriques''\\Universit\`a degli Studi di Milano\\Via Saldini 50\\ 20133 Milano\\ Italy}
\email{mattia12.ornaghi@gmail.com}
\urladdr{\url{https://sites.google.com/view/mattiaornaghi}}

\thanks{A.~C.~is members of GNSAGA (INdAM) and was partially supported by the research project PRIN 2022 ``Moduli spaces and special varieties''.
M.~O.~is supported by the ERC Advanced Grant 101095900-TriCatApp.}

\keywords{Triangulated categories, Fourier-Mukai functors, dg categories, enhancements}

\subjclass[2020]{13D09, 14A22, 14F08, 18G35, 18G80}

\begin{abstract}
We study a triangulated category $\cs$ that admits a full and strong exceptional sequence of three objects with one-dimensional Hom spaces. We show that the isomorphism classes of exact functors from $\cs$ to another triangulated category $\ct$ are in bijection with the isomorphism classes of octahedra in $\ct$ satisfying a natural condition. As an application, we construct an exact functor from $\cs$ to $\Db(\fMod{\K[x]})$ that does not admit a dg lift. This provides an explicit example of a non-Fourier-Mukai exact functor between $\Db(\PP^2)$ and $\Db(\PP^1)$.
\end{abstract}

\maketitle

\section*{Introduction}\label{intro}

Let $X_1$ and $X_2$ be two smooth projective schemes over a field $\K$. An exact ($\K$-linear) functor between the bounded derived categories of coherent sheaves $\fF\colon\Db(X_1)\to\Db(X_2)$ is of {\em Fourier-Mukai} type (or simply {\em Fourier-Mukai}) if there exists $E\in\Db(X_1\times X_2)$ such that $\fF\iso\Phi_E$, where
\[
\Phi_E(\farg):=\R(p_2)_{\ast}\big( E \lotimes p_1^{\ast}(\farg) \big)\colon \Db(X_1)\to \Db(X_2),
\]
with $p_i:X_1\times X_2\to X_i$ the natural projection, for $i=1,2$. The object $E$ is called a {\em kernel} of $\fF$. Such functors were introduced by Shigeru Mukai in the 1980s to study the equivalence between the derived categories of an abelian variety and of its dual. A decade later Orlov proved a celebrated result, which goes under the name \emph{Orlov's representability theorem}: if $\fF$ is fully faithful, then it is Fourier-Mukai with a unique kernel, up to isomorphism (see \cite[2.2 Theorem]{Or2}). To be precise, the original statement also required the existence of an adjoint to $\fF$, but this hypothesis turned out to be redundant. Indeed, it is an easy consequence of \cite[Theorem 1.1]{BV} that every exact functor $\Db(X_1)\to\Db(X_2)$ has adjoints.

Motivated by Orlov's theorem and by the fact that all meaningful geometric functors $\Db(X_1)\to\Db(X_2)$ are Fourier-Mukai, Bondal, Larsen and Lunts conjectured that every exact functor $\Db(X_1)\to\Db(X_2)$ is Fourier-Mukai (see \cite[Conjecture 7.2]{BLL}). It took several years to disprove the conjecture: the first example of a non-Fourier-Mukai exact functor was found in \cite[Theorem 1.4]{RVN} by Rizzardo, Van den Bergh and Neeman. Their construction relies on formidable technical machinery, and even the definition of the functor itself is difficult to access for non-experts (see \cite[6.2]{CS} for a concise overview). 
Subsequent work by the same authors and their collaborators has led to generalizations and partial semplifications of this example (see \cite[Theorem 1.3]{RRV}, \cite{Ku1}). Nevertheless, all known examples of this type share common features: they rely on a refined obstruction theory involving highly non-trivial $A_{\infty}$ methods and require $\dim(X_1),\dim (X_2)\ge3$. Indeed, the techniques developed in \cite{RVN} simply do not work in dimension $\le2$. The same seems to be true for the only other known examples of non-Fourier-Mukai exact functors, namely those obtained by Vologodsky in \cite{V}. Although these examples are much easier to describe, what makes them less interesting is the fact that they require $\K=\FF_p$ (where $p$ is a prime number).

It is therefore natural to ask whether there exist `easy' examples of non-Fourier-Mukai exact functors over an arbitrary field, possibly with $\dim(X_1),\dim(X_2)\le2$. In particular, the cases $X_1=\PP^1$ or $X_2=\PP^1$ are intriguing. Indeed, the derived category $\Db(\PP^1)$ admits a very explicit description, and when $X_1=\PP^1$ or $X_2=\PP^1$ every Fourier-Mukai functor has a kernel that is unique up to isomorphism, see \cite[Proposition 2.4]{CS1}. The main result of this paper is the following.

\begin{thmInt}\label{thmgeo}
There exist non-Fourier-Mukai exact functors $\Db(\PP^2)\to\Db(\PP^1)$ and $\Db(\PP^1)\to\Db(\PP^2)$.
\end{thmInt}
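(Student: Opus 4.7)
The strategy is to reduce the problem to the existence of a non-dg-liftable exact functor $\fF : \cs \to \Db(\fMod{\K[x]})$. Since every Fourier--Mukai functor admits a canonical dg lift (given by the kernel viewed as a dg bimodule), a functor with no dg lift is automatically not Fourier--Mukai, and this property is stable under pre- and post-composition with dg-liftable functors.

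First, I would invoke the bijection (established earlier in the paper) between isomorphism classes of exact functors $\cs \to \ct$ and isomorphism classes of octahedra in $\ct$ satisfying the natural condition, taking $\ct = \Db(\fMod{\K[x]})$. The task then becomes producing a specific octahedron in $\Db(\fMod{\K[x]})$ that fails to come from any $A_{\infty}$-lift of its underlying triangular datum. Since $\K[x]$ has global dimension one and $\Db(\fMod{\K[x]})$ is well understood, such an octahedron can be written down with explicit modules and morphisms, the obstruction to dg-lifting being detected by a nontrivial Massey product or analogous higher operation; this is exactly the slack the octahedra classification exploits.

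Second, transport $\fF$ to the geometric setting. One realizes $\cs$ in relation to $\Db(\PP^2)$ algebraically: Beilinson's tilting equivalence $\Db(\PP^2)\simeq \Db(\fMod B)$ combined with the natural surjection from the Beilinson algebra $B$ onto the path algebra of the linearly oriented $A_3$ quiver (whose derived category is identified with $\cs$) gives a Fourier--Mukai functor relating $\cs$ and $\Db(\PP^2)$. Similarly, the open immersion $j : \mathbb{A}^1 = \Spec \K[x] \mono \PP^1$ yields a Fourier--Mukai Verdier localization $j^* : \Db(\PP^1) \to \Db(\fMod{\K[x]})$ with a Fourier--Mukai right adjoint $\R j_*$ (suitably restricted so that images remain bounded and coherent, e.g.\ on torsion $\K[x]$-modules). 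Composing with these dg-liftable functors on either side of $\fF$ then produces a non-Fourier--Mukai functor $\fG : \Db(\PP^2) \to \Db(\PP^1)$: any dg lift of $\fG$ would, after restriction and composition with the auxiliary functors, induce a dg lift of $\fF$, contradicting its construction. The reverse direction $\Db(\PP^1) \to \Db(\PP^2)$ then follows by passing to an adjoint of $\fG$, since adjoints of Fourier--Mukai functors are Fourier--Mukai.

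The main obstacle is the first step: explicitly exhibiting a non-dg-liftable octahedron in $\Db(\fMod{\K[x]})$ and verifying in a concrete computation that the relevant higher obstruction does not vanish. The octahedra classification automates the functorial bookkeeping, but the geometric content of the example lies entirely in producing this explicit $A_\infty$-obstruction, and this is where the choice of modules and morphisms has to be made with care. Secondary technical points, such as checking that the auxiliary geometric functors transporting $\fF$ are genuinely dg-liftable and handling the bounded-coherent issues in $\R j_*$, are routine once a non-dg-liftable $\fF$ is in hand.
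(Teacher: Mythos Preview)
Your overall architecture matches the paper's: reduce to a non-liftable exact functor $\fF\colon\cs\to\Db(\fMod{\K[x]})$ whose image lies in the torsion part $\Dbz$, transport it to $\Db(\PP^2)\to\Db(\PP^1)$ using the Beilinson sequence on the $\PP^2$ side and the identification of $\Dbz(\fMod{\K[x]})$ with complexes supported at a rational point on the $\PP^1$ side, and then pass to an adjoint for the reverse direction. The paper does exactly this; on the $\PP^2$ side it packages the quotient $B\twoheadrightarrow A$ as explicit functors $\fR,\fS$ between the subcategories $\ce'=\{\so,\so(1),\so(2)\}$ and $\ce=\{A,B,C\}$ with $\fR\comp\fS=\id_\ce$, and then uses $\fG:=\fF\comp H^0(\Pretr(\fR))$. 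One caution: your blanket claim that non-liftability ``is stable under pre- and post-composition with dg-liftable functors'' is false in general (think of composing with the zero functor); what you actually need, and what the paper uses, is the one-sided inverse $\fS$, so that a lift of $\fG$ precomposed with $\Pretr(\fS)$ would be a lift of $\fF$.

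Where you diverge from the paper is the mechanism for showing $\fF$ is non-liftable. You propose to exhibit a single octahedron and detect an $A_\infty$-obstruction or Massey product; this is the hard, RVN-style route the paper explicitly avoids. The paper's argument is completely elementary: since $\ce=\K[Q]$ is cofibrant in $\dgCat$, the map $H^0\colon\Hqe(\ce,\ca)\to\He(\ce,H^0(\ca))$ is \emph{injective}. Hence two \emph{non-isomorphic} good octahedra on the \emph{same} pair of composable morphisms yield two distinct classes $\fF\ne\fF'$ in $\Ht(\cs,\ct)$ with $\fF\rest\ce=\fF'\rest\ce$, and by injectivity at most one of them can have a lift. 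So the entire analytic burden is replaced by a pigeon-hole argument plus a concrete K\"unzer-style calculation exhibiting two non-isomorphic octahedra over $R_3\mor{x}R_3\mor{x}R_3$ in $\Dbz(\fMod{\K[x]})$. No higher operations are computed at any point.
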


This result holds over any field and is an easy geometric application of a purely algebraic theorem. Before stating it, we recall that an exact functor $\Db(X_1)\to\Db(X_2)$ is Fourier-Mukai if and only if it has a (dg) lift. It is also important to note that the category $\Db(X_i)$ (for $i=1,2$) admits a strongly unique enhancement. As a consequence, the existence of a lift for a given functor is independent of the choice of enhancements of $\Db(X_i)$ (see \autoref{dglifts} for more details). We now state our main algebraic result. Throughout, we denote by $\fMod A$ the category of finitely generated left modules over a $\K$-algebra $A$.

\begin{thmInt}\label{thmalg}
Let $\cs$ be a ($\K$-linear) triangulated category with a full and strong exceptional sequence $(A,B,C)$ such that $\cs(A,B)\iso\cs(A,C)\iso\cs(B,C)\iso\K$ and the composition map $\cs(A,B)\otimes\cs(B,C)\to\cs(A,C)$
is nonzero. Then there exists an exact functor $\cs\to\Db(\fMod{\K[x]})$ which does not admit a lift. Moreover, the essential image of the functor is contained in the full triangulated subcategory $\Dbz(\fMod{\K[x]})$ of $\Db(\fMod{\K[x]})$ consisting of complexes whose cohomology is annihilated by some power of $x$.
\end{thmInt}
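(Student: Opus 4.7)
The strategy is to apply the bijection established earlier in the paper between isomorphism classes of exact functors out of $\cs$ and isomorphism classes of octahedra in the target satisfying a natural compatibility condition. It thus suffices to exhibit an octahedron in $\Dbz(\fMod{\K[x]})$ whose data encode the composition $A \to B \to C$ together with its cones and which cannot arise from any exact functor admitting a dg lift.

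To produce such an octahedron, I would work with complexes built out of the residue field $\K = \K[x]/(x)$ and its self-extensions $\K[x]/(x^n)$. Because $\Ext^i_{\K[x]}(\K,\K) \iso \K$ for $i = 0, 1$ and vanishes otherwise, there is just enough flexibility to assign generators to the three prescribed morphisms $\fF(A) \to \fF(B)$, $\fF(B) \to \fF(C)$ and $\fF(A) \to \fF(C)$ (with the first two composing to the third) while retaining freedom in the choice of the completing triangle. These data should be chosen so that the bottom triangle of the resulting octahedron realizes a nontrivial triple Massey product — equivalently, a nonzero $A_\infty$-operation on the subcategory generated by the three images — which is possible purely triangulatedly but not in any dg-coherent way.

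The non-liftability is then obtained from the following principle: in any dg enhancement of $\Db(\fMod{\K[x]})$, all of which are quasi-equivalent by the strong uniqueness of enhancements, once the images of the three generating morphisms are fixed as honest cocycles the octahedron is determined up to dg-isomorphism by the literal mapping-cone construction, and this rigidity enforces compatibility with the canonical $A_\infty$-structure on $\Ext^*_{\K[x]}(\K,\K)$. A chosen mismatch between this rigid dg octahedron and the flexible triangulated octahedron above produces the obstruction. The main technical difficulty is exhibiting the Massey-product discrepancy explicitly — the same kind of obstruction that underlies all known non-Fourier-Mukai constructions — but the octahedral reformulation established earlier in the paper reduces the analysis to a concrete finite-dimensional linear-algebra computation in $\Dbz(\fMod{\K[x]})$, which is precisely what makes this example much more accessible than those in the existing literature.
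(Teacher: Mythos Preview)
Your proposal correctly identifies the starting point---use the bijection from \autoref{cor:bij} and work with modules $\K[x]/(x^n)$ in $\Dbz(\fMod{\K[x]})$---but it misidentifies the mechanism by which non-liftability is actually proved, and the central step is missing.

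The paper does \emph{not} detect the obstruction via a Massey product or an $A_\infty$-discrepancy for a single octahedron. Instead it constructs \emph{two} explicit non-isomorphic good octahedra on the \emph{same} pair of composable morphisms $R_3\xrightarrow{x}R_3\xrightarrow{x}R_3$ (\autoref{prop:oct}, adapting K\"unzer). The non-liftability then follows from a pigeonhole argument (\autoref{noliftcrit} and \autoref{nolift}): since $\ce=\K[Q]$ is cofibrant in $\dgCat$, the map $H^0\colon\Hqe(\ce,\ca)\to\He(\ce,H^0(\ca))$ is injective, so two distinct exact functors $\cs\to\ct$ that agree on $\ce$ cannot both have lifts. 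No $A_\infty$-computation is ever performed; the whole point of the paper is that this route avoids the obstruction-theoretic machinery you invoke.

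Your ``rigidity'' sentence---that fixing the three morphisms as cocycles determines the dg octahedron---is morally the injectivity statement above, but you use it to try to rule out one octahedron directly. To make that work you would have to (i) write down the canonical dg octahedron explicitly, (ii) write down your ``flexible'' triangulated one, and (iii) prove they are not isomorphic. Step (iii) is exactly the content of \autoref{prop:oct}, and once you have it, the Massey-product language is superfluous. As written, your proposal asserts that a suitable discrepancy exists but does not exhibit it, so the argument has a genuine gap.

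A smaller point: your suggestion to take $\fF(A)=\fF(B)=\fF(C)=\K$ based on $\Ext^*_{\K[x]}(\K,\K)$ is too restrictive. The paper uses $R_3=\K[x]/(x^3)$ for all three images, and the non-isomorphic octahedra differ in the morphisms $k,k'$ through $E=\sh{R_2}\oplus R_2$; with $\K$ in place of $R_3$ there is not enough room for two distinct completions.
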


We point out that \autoref{thmalg} provides an example of a non liftable functor between the bounded derived categories of two hereditary algebras. Indeed, one has $\cs\iso\Db(\fMod A)$ where $A$ is a (left) hereditary algebra (see \autoref{KS}). Note that such triangulated categories have strongly unique enhancements; see \autoref{hered}).

We now briefly describe the proof of \autoref{thmalg}. The triangulated category $\cs$ has a particularly simple structure, which allows a complete characterization of its distinguished triangles (see \autoref{triangles}). More precisely, we denote by $f$ and $g$ the generators of the one-dimensional $\K$-modules $\cs(A,B)$ and $\cs(B,C)$, respectively. Then every distinguished triangle in $\cs$ arises from the four distinguished triangles appearing in an octahedron extending $f$ and $g$, together with the two additional (a priori not necessarily distinguished) triangles associated with this octahedron (see \cite[Remarque 1.1.13]{BBD}). In general, we call an octahedron \emph{good} if these two extra triangles are distinguished. It follows that isomorphism classes of exact functors from $\cs$ to a triangulated category $\ct$ are in bijection with isomorphism classes of good octahedra in $\ct$; see \autoref{cor:bij}. This characterization makes it easy to deduce the existence of an exact functor $\cs\to\ct$ without a lift, whenever $\ct$ admits a strongly unique enhancement and contains two non isomorphic good octahedra extending the same pair of composable morphisms (see \autoref{nolift}). To the best of our knowledge, the only example in the literature of two non-isomorphic (good) octahedra extending the same pair of composable morphisms is due to K\"unzer, see \cite[\S 2]{Ku}. In his example, the triangulated category is the stable Frobenius category of $\fMod{\ZZ/(p^6)}$, which is not linear over a field. A $\K$-linear variant can be obtained by replacing the ring $\ZZ/(p^6)$ with $\K[x]/(x^6)$. However, it is unclear whether this triangulated category has a (strongly) unique enhancement. On the other hand, we show that K\"unzer's construction can be adapted to produce a similar example in $\Dbz(\fMod{\K[x]})$ (see \autoref{prop:oct}).

\subsection*{Conventions and content of the paper}

Throughout the paper $\K$ denotes a field (although many general results hold when $\K$ is an arbitrary commutative ring). All preadditive categories and all additive functors are assumed to be $\K$-linear. 

In \autoref{dglifts} we recall basic notions concerning dg categories, (dg) enhancements of triangulated categories and (dg) lifts of exact functors, together with the relationship between liftable and Fourier-Mukai functors in the geometric setting. All of this material is standard, except for some notational conventions. In \autoref{sec:goodoct} we introduce the notion of good octahedron and we provide an explicit example of two non-isomorphic good octahedra extending the same pair of composable morphisms; see \autoref{prop:oct}. A complete proof of this proposition, closely following K\"unzer's original argument, is given in \autoref{noniso}. In \autoref{sec:Scat}, after presenting some general properties of triangulated categories admitting a full and strong exceptional sequence, we treat the case of the category $\cs$ in \autoref{thmalg}, for which we state \autoref{triangles}. Although this result is possibly known to experts, we give a proof in \autoref{excseq}, as we were unable to find a suitable reference in the literature. Finally, in \autoref{sec:thmalg} and \autoref{sec:thmgeo} we prove \autoref{thmalg} and \autoref{thmgeo}.

\section{Dg lifts and dg enhancements}\label{dglifts}
We assume the reader is familiar with the language of dg categories. A complete introduction can be found in \cite{K2}, see also \cite{Ge}. 

We denote by $\dgCat$ the category of (small, $\K$-linear) dg categories with dg functors as morphisms, and by $\Hqe$ its localization with respect to quasi-equivalences. Note that $\dgCat$ and $\Hqe$ have the same objects. Regarding morphisms, we recall that $\dgCat$ has a model structure whose weak equivalences are quasi-equivalences and such that every object is fibrant (see \cite{Ta}). It follows that every morphism $\ca\to\cb$ in $\Hqe$ can be represented by a diagram $\ca\xleftarrow\fS\ca'\mor\fF\cb$ in $\dgCat$, where $\fS$ is any quasi-equivalence with $\ca'$ cofibrant. 

Given a dg category $\ca$, an object $A\in\ca$ and a closed degree zero morphism $f$ in $\ca$, we can always define the $n$-shift of $A$ (where $n\in\ZZ$) and the mapping cone of $f$ (see \cite[Definition 2.3.1 and 2.3.2]{Ge}). 
We say that $\ca$ is {\em strongly pretriangulated} if it is closed under shifts and cones. 
Given a dg category $\ca$ we can always ``embed'' $\ca$ in a strongly pretriangulated dg category.
Namely, we denote by $\Pretr(\ca)$ the {\em pretriangulated envelope} of $\ca$. This is the smallest strongly pretriangulated full dg subcategory of $\C{\ca}$, containing the image of $\ca$ under the Yoneda embedding, which is closed under dg isomorphisms.
Here $\C{\ca}$ denotes the dg category of dg functors from the opposite dg category $\ca\opp$ to the dg category of complexes of $\K$-modules $\C{\K}$. We say that a dg category $\ca$ is {\em pretriangulated} if the Yoneda embedding $\ca\to \Pretr(\ca)$ is a quasi-equivalence.  Denoting by $\Hqep$ the full subcategory of $\Hqe$ whose objects are the pretriangulated dg categories, the pretriangulated envelope gives rise to a functor $\Pretr\colon\Hqe\to \Hqep$, which is left adjoint to the inclusion functor $\Hqep\to\Hqe$ (see \cite[\S 4.5]{K2} or \cite[Proposition 3.5.3]{Ge}).

If $\ca$ is a pretriangulated dg category then its homotopy category $H^0(\ca)$ has a natural structure of triangulated ($\K$-linear) category. We will denote by $H^0_t(\ca)$ the category $H^0(\ca)$ with this triangulated structure. As a matter of notation, $\He$ denotes the category of (small, $\K$-linear) categories whose morphisms are the isomorphism classes of functors.
On the other hand, $\Ht$ denotes the category of (small, $\K$-linear) triangulated categories whose morphisms are the isomorphism classes of exact functors. 

\begin{remark}
While a pretriangulated dg category is just a dg category satisfying some properties, a triangulated category is given by a category with additional structure (shift and distinguished triangles). 
An exact functor between triangulated categories must preserve the triangulated structure. In particular, it is given by a pair $(\fF,\alpha)$, where $\fF$ is an ordinary functor and $\alpha\colon\fF\comp\sh{} \to\sh{}\comp\fF$ is a natural isomorphism. With a standard abuse of notation, we will denote such an exact functor simply by $\fF$. However, it is important to recall that an (iso)morphism of exact functors $\gamma\colon(\fF,\alpha)\to(\fF',\alpha')$ is given by an ordinary (iso)morphism of functors $\gamma\colon\fF\to\fF'$ such that $\alpha'_X\comp\gamma_{\sh X}=\sh{\gamma_X}\comp\alpha_X$, for every object $X$ in the source category.
\end{remark}

Clearly the map sending a dg category (respectively a pretriangulated dg category) $\ca$ to $H^0(\ca)$ (respectively $H^0_t(\ca)$) extends to a functor $H^0\colon\Hqe\to\He$ (respectively $H^0_t\colon\Hqep\to\Ht$). Moreover, there is a commutative diagram 
\begin{equation}\label{eq:comm}
\xymatrix{
\Hqep\ar[d]_{H^0_t} \ar[r] & \Hqe\ar[d]^{H^0} \\
\Ht \ar[r] & \He,
}
\end{equation}
where the top (respectively bottom) horizontal map is the natural inclusion (respectively forgetful) functor.

\begin{definition}
A {\em (dg) enhancement} of a triangulated category $\ct$ is a pair $(\ca,\fE)$, where $\ca$ is a pretriangulated dg category and $\fE\colon H^0_t(\ca)\to \ct$
is an isomorphism in $\Ht$. A triangulated category having an enhancement is called {\em algebraic}.

Let $\ct$ be an algebraic triangulated category. We say that $\ct$ has a {\em (strongly) unique enhancement} if, given two enhancements $(\ca,\fE)$ and $(\ca',\fE')$ of $\ct$, there exists an isomorphism $f\colon \ca\to \ca'$ in $\Hqe$ (such that $\fE=\fE'\comp H^0_t(f)$ in $\Ht$).
\end{definition}

\begin{ex}\label{hered}
If $\ca$ is a hereditary abelian category and $?\in\{-,+,b,\emptyset \}$ then $\Da(\ca)$ has a strongly unique enhancement, see \cite[Theorem A]{CNS2}.
\end{ex}

\begin{definition}\label{def:lift}
Let $(\ca,\fE)$ and $(\ca',\fE')$ be enhancements of two triangulated categories $\ct$ and $\ct'$. We say that 
$f\in\Hqe(\ca,\ca')$ is a {\em (dg) lift} of $\fF\in\Ht(\ct,\ct')$, with respect to the enhancements $(\ca,\fE)$ and $(\ca',\fE')$, if the following diagram
\[
\xymatrix{
H^0_t(\ca) \ar[d]_{\fE} \ar[r]^-{H^0_t(f)} & H^0(\ca') \ar[d]^{\fE'} \\
\ct \ar[r]^-{\fF} & \ct'
}
\]
is commutative in $\Ht$.
\end{definition}

\begin{remark}\label{stronglift}
In general, whether $\fF$ admits a lift depends on the choice of the enhancements $(\ca,\fE)$ and $(\ca',\fE')$.
However, it is easy to see that this dependence disappears when $\ct$ and $\ct'$ have strongly unique enhancements; in this case we can simply say that 
$\fF$ has a lift.
\end{remark}

\begin{prop}\label{FMlift}
Let $X_1$ and $X_2$ be two smooth projective schemes over $\K$. 
\begin{enumerate}
\item $\Db(X_i)$ has a strongly unique enhancement, for $i=1,2$.
\item An exact functor $\Db(X_1)\to\Db(X_2)$ is Fourier-Mukai if and only if it has a lift.
\end{enumerate}
\end{prop}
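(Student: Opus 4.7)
The proof breaks into two largely independent tasks. For part (1), the plan is to quote the known strong uniqueness of enhancements of $\Db(X)$ for $X$ smooth projective; this was established in \cite{CS} (building on the earlier uniqueness of enhancement result of Lunts--Orlov). With (1) in hand, \autoref{stronglift} reduces the proof of (2) to verifying it for one convenient pair of enhancements $\ca_i$ of $\Db(X_i)$, for example the dg subcategories of h-injective resolutions of bounded complexes of coherent sheaves (or any other standard explicit model).

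For the ``if'' direction of (2), I would construct the lift explicitly. Given a kernel $E\in\Db(X_1\times X_2)$, pick a dg representative of $E$ inside a fixed enhancement of $\Db(X_1\times X_2)$. Each operation appearing in $\Phi_E$---the pullback $p_1^{\ast}$, the derived tensor product $\farg\lotimes E$, and the derived pushforward $\R(p_2)_{\ast}$---admits a standard dg model defined on h-injective and h-flat resolutions; composing these models produces a morphism $f\in\Hqe(\ca_1,\ca_2)$ whose homotopy functor $H^0_t(f)$ is naturally isomorphic to $\Phi_E$, hence a lift.

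For the ``only if'' direction, I would invoke To\"en's internal Hom theorem in $\Hqe$. For smooth and proper dg categories---which the chosen enhancements are, since $X_1,X_2$ are smooth projective---the internal $\mathrm{R}\Hom(\ca_1,\ca_2)$ in $\Hqe$ is represented by a dg category of $\ca_1\opp\lotimes\ca_2$-bimodules, and in the geometric setting this dg category is quasi-equivalent to an enhancement of $\Db(X_1\times X_2)$, the quasi-equivalence being realized by the Fourier--Mukai recipe itself. Consequently any $f\in\Hqe(\ca_1,\ca_2)$ corresponds to an object $E\in\Db(X_1\times X_2)$, and unwinding the identifications yields $H^0_t(f)\iso\Phi_E$. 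The main obstacle is precisely this appeal to To\"en's theorem: smoothness and properness of $X_1,X_2$ are essential to guarantee that the bimodule associated with a dg lift is representable by an honest object of $\Db(X_1\times X_2)$ and not merely a formal bimodule.
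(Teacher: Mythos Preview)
Your approach to part (2) is essentially the same as the paper's: both reduce to a fixed pair of enhancements and then invoke To\"en's derived Morita theory (the paper points to \cite[\S 8.3]{To} and, for a complete argument sufficient in this setting, to Lunts--Schn\"urer \cite[Theorem 1.1]{LS}). Your explicit description of the ``if'' direction via dg models of $p_1^{\ast}$, $\farg\lotimes E$, $\R(p_2)_{\ast}$ is more detailed than the paper's but amounts to the same thing.

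The one genuine difference is in the logical order and the source of part (1). You cite strong uniqueness as an established fact from \cite{CS} and then use it, via \autoref{stronglift}, to reduce (2) to a single pair of enhancements. The paper proceeds in the opposite direction: it first establishes (2) for \emph{suitable} enhancements via To\"en/Lunts--Schn\"urer, and then \emph{deduces} strong uniqueness using Orlov's representability theorem (any exact autoequivalence of $\Db(X_i)$ is fully faithful, hence Fourier--Mukai, hence liftable with respect to the chosen enhancement). Be careful with your citation: \cite{CS} is a survey, and Lunts--Orlov's original theorem gives uniqueness rather than strong uniqueness; the paper's route via Orlov avoids having to locate a separate reference for strong uniqueness of $\Db(X_i)$. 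Either organization is fine, but you should either tighten the citation for (1) or adopt the paper's trick of extracting it from Orlov's theorem.
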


\begin{proof}
One can choose suitable enhancements of $\Db(X_1)$ and $\Db(X_2)$ in such a way that an exact functor $\Db(X_1)\to\Db(X_2)$ is Fourier–Mukai if and only if it admits a lift. This fact follows from a more general statement claimed by To\"en (see \cite[\S 8.3]{To}).
A complete proof of To\"en's statement in full generality will be available in \cite{COS}. However, note that the claim was already proved under stronger hypotheses by Lunts and Schn\"urer in \cite[Theorem 1.1]{LS}, and this is enough in our setting. Then one can conclude easily using Orlov's representability theorem (see also \cite[\S 6.3]{CS}).
\end{proof}

\section{non-isomorphic good octahedra}\label{sec:goodoct}

Let $\ct$ be a triangulated category.
Let $A\mor f B\mor g C$ be two composable morphisms of $\ct$ and $h:=g\comp f$. We can complete $f$, $g$ and $h$ to three distinguished triangles
\begin{gather}
\label{eq:tri1}
\tri A f B{f'}D{f''}, \\
\label{eq:tri2}
\tri A hC{h'}E{h''}, \\
\label{eq:tri3}
\tri B g C{g'}F{g''},
\end{gather}
in $\ct$.
By (TR4) there exists a commutative diagram
\begin{equation}\label{eq:oct}
\xymatrix{
A \ar[r]^-f \ar@{=}[d] & B \ar[r]^-{f'} \ar[dr]^-l \ar[d]^g & D \ar[r]^-{f''} \ar[d]^k & \sh A \ar@{=}[d] \\
A \ar[d]^f \ar[r]^-h & C \ar[r]^-{h'} \ar@{=}[d] & E \ar[r]^-{h''} \ar[d]^{k'} \ar[dr]^-{l'} & \sh A \ar[d]^{\sh f} \\
B \ar[r]^-g & C \ar[r]^-{g'} & F \ar[r]^-{g''} & \sh B
}
\end{equation}
such that 
\begin{gather}
\label{eq:tri4}
\tri D k E{k'}F{k''}
\end{gather}
is a distinguished triangle in $\ct$, where $k'':=\sh{f'}\comp g''$.
An \emph{octahedron} in $\ct$ is a commutative diagram of the form \eqref{eq:oct} such that \eqref{eq:tri1}, \eqref{eq:tri2}, \eqref{eq:tri3} and \eqref{eq:tri4} are distinguished triangles. To shorten notation, we set
\begin{equation}\label{eq:sets}
\I:=\{A,B,C,D,E,F\}, \qquad
\J:=\{f,f',f'',g,g',g'',h,h',h'',k,k',k'',l,l'\}.
\end{equation}
There is an obvious notion of isomorphism for octahedra. Namely, assume that we have two octahedra, the first given by \eqref{eq:oct}, and the second by a similar diagram, where every object $X\in\I$ is replaced by $\tilde X$ and every morphism $x\in\J$ is replaced by $\tilde x$. Then we say that the two octahedra are isomorphic if there exist six isomorphisms $\phi_X\colon X\to\tilde X$, for $X\in\I$, such that $\tilde x\comp\phi_Y=\phi_Z\comp x$ for every morphism $x\colon Y\to Z$ in $J$, where, if $Z=\sh X$ for some $X\in\I$, we set $\phi_Z:=\sh{\phi_X}$.

We will say that the octahedron \eqref{eq:oct} is \emph{good} if the following triangles
\begin{gather}
\label{eq:tri5}
\tri B l E{
\bigl(\begin{smallmatrix}
k' \\
-h''
\end{smallmatrix}\bigl)
}{F\oplus\sh A}{
(
\begin{smallmatrix}
g'' & \sh f    
\end{smallmatrix})
}, \\
\label{eq:tri6}
\tri B{
\bigl(
\begin{smallmatrix}
g \\
f'
\end{smallmatrix}
\bigr)
}{C\oplus D}{
(\begin{smallmatrix}
h' & -k    
\end{smallmatrix})
}E{l'}.
\end{gather}
are distinguished.

\begin{remark}\label{goodcrit}
It seems to be an open problem if in an arbitrary triangulated category every pair of composable morphisms can be completed to a good octahedron (see \cite[Remarque 1.1.13]{BBD}). For later use, we point out a (rather restrictive) sufficient condition for an octahedron to be good. Note that in any case, by (TR4) applied to $l=h'\comp g$, there exists an octahedron
\[
\xymatrix{
B \ar[r]^-g \ar@{=}[d] & C \ar[r]^-{g'} \ar[d]^{h'} & F \ar[r]^-{g''} \ar[d]^c & \sh B \ar@{=}[d] \\
B \ar[d]^g \ar[r]^-l & E \ar[r]^-a \ar@{=}[d] & G \ar[r]^-b \ar[d]^d & \sh B \ar[d]^{\sh g} \\
C \ar[r]^-{h'} & E \ar[r]^-{-h''} & \sh A \ar[r]^-{\sh h} & \sh C.
}
\]
In particular the triangle $\tri A{g'\comp h}F c G d$ is distinguished and, since $g'\comp h=g'\comp g\comp f=0$, we can assume (up to replacing $a$, $b$, $c$ and $d$ with $s\comp a$, $b\comp s^{-1}$, $s\comp c$ and $d\comp s^{-1}$ for some isomorphism $s$) $G=F\oplus\sh A$, with $c$ the natural inclusion and $d$ the natural projection. Writing 
\[
a=\bigl(\begin{smallmatrix}
a' \\
a''
\end{smallmatrix}\bigr)
\colon E\to F\oplus\sh A, \qquad
b=(\begin{matrix}
b' & b''
\end{matrix})
\colon F\oplus\sh A\to\sh B,
\]
the commutativity of the diagram implies
\[
b'=g'', \qquad a''=-h'', \qquad a'\comp h'=g', \qquad \sh g\comp b''=\sh h.
\]
Therefore it is certainly true that \eqref{eq:tri5} is distinguished if $k'\colon E\to F$ is the only morphism such that $k'\comp h'=g'$ and $\sh f\colon\sh A\to\sh B$ is the only morphism such that $\sh g\comp\sh f=\sh h$. Similarly one can see that \eqref{eq:tri6} is distinguished if $g\colon B\to C$ is the only morphism such that $g\comp f=h$ and $k\colon D\to E$ is the only morphism such that $h''\comp k=f''$.
\end{remark}

We conclude this section by giving an example of two non-isomorphic good octahedra in $\Dbz(\fMod R)$, where $R:=\K[x]$.
Setting $R_n:=R/(x^n)$ (for $n\in\NN$), we denote by $x^i\colon R_m\to R_n$, for $i\ge0,n-m$, the morphism of $\fMod R$ induced by multiplication by $x^i$. Moreover, $y\colon R_m\to\sh{R_n}$ will denote the morphism of $\Db(\fMod R)$ corresponding to the isomorphism class of the short exact sequence 
\[
0\to R_n\mor{x^m}R_{n+m}\mor{x^0} R_m\to0
\]
in $\fMod R$.

\begin{prop}\label{prop:oct}
The two composable morphisms
\begin{equation}\label{eq:setup1}
A:=R_3\mor{f:=x^1}B:=R_3\mor{g:=x^1}C:=R_3
\end{equation}
can be extended to two non-isomorphic good octahedra in $\Dbz(\fMod R)$.
Using the notation of \eqref{eq:oct}, the first octahedron is given by:
\begin{equation}\label{eq:oct3}
\begin{gathered}
F=D=\sh{R_1}\oplus R_1, \qquad E=\sh{R_2}\oplus R_2,\\
f'=\Bigl(\begin{smallmatrix}
y \\
x^0
\end{smallmatrix}\Bigr),
\qquad h'=\Bigl(\begin{smallmatrix}
y \\
x^0
\end{smallmatrix}\Bigr),
\qquad g''=(\begin{smallmatrix}
-\sh{x^2} &\quad y
\end{smallmatrix}), \qquad h''=(\begin{smallmatrix}
-\sh{x^1} &\quad y
\end{smallmatrix}), 
\end{gathered}
\end{equation}
and 
\begin{gather}\label{eq:oct3b}
k=\Bigl(\begin{smallmatrix}
\sh{x^1} & 0\\
0 & x^1
\end{smallmatrix}\Bigr), \qquad k'=\Bigl(\begin{smallmatrix}
\sh{x^0} & 0\\
0 & x^0
\end{smallmatrix}\Bigr)
\end{gather}
(the other morphisms are determined by composition). The second octahedron is given by \eqref{eq:oct3} and
\begin{equation}\label{eq:octdata2}
\tilde{k}=\Big(\begin{smallmatrix}
\sh{x^1} & y \\
0 & x^1
\end{smallmatrix}\Bigr), \qquad \tilde{k}'=\Bigl(\begin{smallmatrix}
\sh{x^0} & -y \\
0 & x^0
\end{smallmatrix}\Bigr).
\end{equation}
\end{prop}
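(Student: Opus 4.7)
The plan is to split the proof into three stages: (i) verify that the stated data define two octahedra, (ii) verify both octahedra are good, and (iii) show they are non-isomorphic. The first two stages are computational in a pretriangulated enhancement of $\Db(\fMod R)$, for instance the dg category of bounded complexes of projective modules; the third is the heart of the proposition and adapts K\"unzer's argument in \cite{Ku} from the Frobenius category of $\ZZ/(p^6)$-modules to $\Dbz(\fMod R)$.

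For stage (i), since $R=\K[x]$ is hereditary, every object of $\Dbz(\fMod R)$ is a direct sum of shifts of $R_n$'s. In particular one computes by hand that $\cone(x^1\colon R_3\to R_3)\iso\sh{R_1}\oplus R_1$ (cohomology $R_1$ in degrees $-1$ and $0$, with the $\Ext^2$-obstruction vanishing by hereditariness) and $\cone(x^2\colon R_3\to R_3)\iso\sh{R_2}\oplus R_2$, identifying $D$, $F$, and $E$. Under these identifications the morphisms $f',f'',h',h'',g',g''$ in \eqref{eq:oct3} are recognized as the standard cone projections and shifted inclusions (which one checks by comparing with the connecting morphisms $y$ of the defining short exact sequences), so the triangles \eqref{eq:tri1}, \eqref{eq:tri2}, \eqref{eq:tri3} are automatically distinguished. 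The commutativity of \eqref{eq:oct} reduces to routine matrix multiplication in both cases. To verify \eqref{eq:tri4}, one exhibits explicit chain representatives of $k$ and $\tilde k$ and computes that each mapping cone is quasi-isomorphic to $F$ with the asserted $k''$.

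For stage (ii), I would apply the criterion of \autoref{goodcrit}. All relevant $\Hom$-groups between summands of the form $\sh{R_a}\oplus R_b$ are either zero or one-dimensional and are explicitly computable from $\Hom(R_a,R_b[i])$ for $i\in\{0,1\}$, so the uniqueness required by the criterion reduces to a small dimension count. Where the criterion does not apply cleanly — notably because of the off-diagonal $y$ in $\tilde k$ — I would fall back on direct mapping-cone verification of \eqref{eq:tri5} and \eqref{eq:tri6}.

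The main obstacle is stage (iii), the non-isomorphism. Since the two octahedra share every datum except $(k,k')$ versus $(\tilde k,\tilde k')$, any hypothetical isomorphism between them is a compatible family of automorphisms $\phi_X\colon X\to X$ for $X\in I$. The automorphisms of $A=B=C=R_3$ are units in $\K[x]/(x^3)$, and the constraints $f\comp\phi_A=\phi_B\comp f$, $g\comp\phi_B=\phi_C\comp g$, $h\comp\phi_A=\phi_C\comp h$ restrict them to a finite-parameter family in which the low-order coefficients coincide. Commutation with $f',f'',h',h'',g',g''$ then expresses $\phi_D,\phi_E,\phi_F$ as $2\times 2$ matrices with entries in the appropriate $\Hom$ groups (each either $\K$ or $\K\cdot y$), parametrized by the same scalars together with at most a few additional ones. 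The remaining equations $\phi_E\comp k=\tilde k\comp\phi_D$ and $\phi_F\comp k'=\tilde k'\comp\phi_E$ translate into a linear system over $\K$ in these parameters, and the content of the proof is to check that this system is inconsistent: the nonzero class $y\in\Ext^1(R_1,R_2)$ appearing in the upper-right entry of $\tilde k-k$ cannot be written as an admissible commutator $\phi_E\comp k-k\comp\phi_D$. This is a finite linear-algebra computation, which is the $\K$-linear translation of the mod-$p^6$ obstruction in \cite{Ku}.
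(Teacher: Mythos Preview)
Your stages (i) and (iii) match the paper. The non-isomorphism is indeed a finite linear-algebra computation in the automorphism parameters; in the paper the actual contradiction arises by combining $\tilde k'\comp\phi_E=\phi_F\comp k'$ (which pins the off-diagonal coefficient of $\phi_E$) with $h''\comp\phi_E=\sh{\phi_A}\comp h''$ and $\phi_E\comp h'=h'\comp\phi_C$, rather than from the $k$-equations in isolation, but the shape is as you describe.

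Stage (ii), however, has a gap: the criterion of \autoref{goodcrit} does not apply here, even to the first octahedron --- all four of its uniqueness hypotheses fail. Two of them require $f,g\colon R_3\to R_3$ to be the \emph{only} morphisms with the prescribed composite, but $\Hom(R_3,R_3)\iso\K[x]/(x^3)$ is three-dimensional and any $x^1+cx^2$ also satisfies $(x^1+cx^2)\comp x^1=x^2$; these conditions concern $A,B,C$, not the summands of $D,E,F$, so your one-dimensionality claim is looking at the wrong $\Hom$-spaces. The other two hypotheses require $k$ and $k'$ to be uniquely determined by $h''\comp k=f''$ and $k'\comp h'=g'$. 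Here your observation that the $\Hom$-groups between the individual summands $R_1,R_2,\sh{R_1},\sh{R_2}$ are at most one-dimensional is correct, but it does not give uniqueness of the matrix: the off-diagonal entry in $\Ext^1(R_1,R_2)$ (respectively $\Ext^1(R_2,R_1)$) is a \emph{free} parameter, since it composes to zero against the relevant components of $h''$ (respectively $h'$). That freedom is precisely what lets $\tilde k\ne k$ exist. So you are thrown onto your fallback for all four good triangles, not just for the second octahedron.

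The paper organises this more efficiently. Goodness of the first octahedron is verified by two explicit mapping-cone computations in $\Kb(\proj R)$. For the second octahedron, rather than repeat these, the paper proves a small deformation lemma: given an octahedron and a square-zero $\epsilon\colon E\to E$ with $\epsilon\comp l=l'\comp\epsilon=h''\comp\epsilon\comp k=k'\comp\epsilon\comp h'=0$, replacing $(k,k')$ by $((1+\epsilon)\comp k,\,k'\comp(1-\epsilon))$ yields another octahedron, and a further pair of nilpotent endomorphisms $\epsilon',\epsilon''$ of $E$ subject to analogous vanishing ensures it remains good. Taking $\epsilon=\bigl(\begin{smallmatrix}0&y\\0&0\end{smallmatrix}\bigr)$ on $E=\sh{R_2}\oplus R_2$ recovers $(\tilde k,\tilde k')$, so goodness of the second octahedron is inherited from the first without a second round of cone calculations.
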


\begin{remark}\label{Kunzer}
The above example is closely related to the one by K\"unzer in \cite{Ku}. More precisely, all objects and morphisms in \autoref{prop:oct} can be interpreted also in the stable Frobenius category $\stable{\fMod{R_6}}$ of $\fMod{R_6}$ (to see this, one has to use the fact that $\sh{R_i}\iso R_{6-i}$ in $\stable{\fMod{R_6}}$, for $0\le i\le6$). If one reformulates K\"unzer's example in an obvious way, replacing $\stable{\fMod{\ZZ/(p^6)}}$ with $\stable{\fMod{R_6}}$, it turns out that the first octahedron in \autoref{prop:oct} corresponds exactly to the first octahedron in \cite[\S 2]{Ku}. Since the same is not true for the second octahedron, we cannot directly deduce \autoref{prop:oct} from K\"unzer's result. However, the proof of \autoref{prop:oct}, which will be given in \autoref{noniso}, is much inspired by \cite{Ku}.
\end{remark}

\section{Full and strong exceptional sequences}\label{sec:Scat}
Let $\cs$ be a triangulated category with a full and strong exceptional sequence $(\E_1,\dots,\E_n)$. Recall that this means the following, for every $i,j\in\{1,\dots,n\}$ and every $n\in\ZZ$:
\begin{itemize}
    \item $\cs(\E_i,\E_i)\iso\K$;
    \item $\cs(\E_i,\sh[n]{\E_j})\ne0\implies n=0\text{ and }i\le j$;
    \item $\cs=\gen{\E_1,\dots,\E_n}$ is the triangulated envelope of $\{\E_1,\dots,\E_n\}$.
\end{itemize}
We will denote by $\ce$ the full subcategory of $\cs$ with objects $\{\E_1,\dots,\E_n\}$, and we will also assume that $\ce$ has finite-dimensional (over $\K$) Homs. Note that this implies that $\oplus_{n\in\ZZ}\cs(X,\sh[n]Y)$ is finite-dimensional for every $X,Y\in\cs$.

Before stating the next result, we recall that, given a quiver $Q$, the category $\K[Q]$ is defined as follows:
\begin{itemize}
\item $\K[Q]$ has the same objects of $Q$.
\item Given $X,Y\in \K[Q]$, the Hom space $\K[Q](X,Y)$ is the free $\K$-module generated by all finite strings of composable morphisms (of $Q$) with (initial) source $X$ and (final) target $Y$.
\item The composition is given by string concatenation, extended by bilinearity. The identities are the empty strings.
\end{itemize}

\begin{lem}\label{KS}
$\cs$ is a Krull-Schmidt category.\footnote{An additive category is Krull-Schmidt if every object is a finite direct sum of objects with local endomorphism rings. In a Krull-Schmidt category an object is indecomposable if and only if its endomorphism ring is local, and every object is in an essentially unique way a finite direct sum of indecomposable objects.} Moreover, if $\cs$ is algebraic, then the following is true.
\begin{enumerate}
    \item $\cs\iso\Db(\fMod A)$ in $\Ht$, where $A:=\End_\cs(\oplus_{i=1}^n\E_i)$.
    \item $\cs$ has a strongly unique enhancement and an enhancement of $\cs$ is given by $\Pretr(\ce)$ (where $\ce$ is regarded as a dg category concentrated in degree $0$).
    \item If $\ce$ is of the form $\K[Q]$ for some quiver $Q$, then $A$ is left hereditary.
\end{enumerate}
\end{lem}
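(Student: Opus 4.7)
The plan is to tackle the Krull-Schmidt claim first, and then the three numbered statements in sequence under the algebraic assumption.

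For Krull-Schmidt, the remark just before the statement ensures that $\bigoplus_{n\in\ZZ}\cs(X,\sh[n]Y)$ is finite-dimensional over $\K$ for all $X,Y\in\cs$, so every endomorphism algebra in $\cs$ is finite-dimensional and hence semiperfect. It is then enough to check that idempotents split in $\cs$; Krull-Schmidt will follow from standard semiperfect-ring arguments. I would derive idempotent splitting by induction on the ``length'' of an object as an iterated cone of shifts of the $\E_i$: the exceptional sequence provides a canonical Postnikov-type filtration, and an idempotent can be split compatibly with this filtration stage by stage.

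Now assume $\cs$ is algebraic, fix an enhancement $(\ca,\fE)$, and choose dg lifts $\tilde\E_1,\dots,\tilde\E_n\in\ca$ of the $\E_i$ via $\fE$. Let $\tilde\ce\subset\ca$ denote the full dg subcategory on these lifts. By the \emph{strong} exceptional hypothesis, each hom-complex $\tilde\ce(\tilde\E_i,\tilde\E_j)$ has cohomology concentrated in degree zero, with $H^0$ matching $\ce(\E_i,\E_j)$, so the canonical truncation zigzag
\[
\tilde\ce\;\longleftarrow\;\tau^{\leq0}\tilde\ce\;\longrightarrow\;H^0(\tilde\ce)\iso\ce
\]
exhibits $\tilde\ce\iso\ce$ in $\Hqe$. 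Applying the left adjoint $\Pretr$ gives $\Pretr(\ce)\iso\Pretr(\tilde\ce)$ in $\Hqep$, and the inclusion $\tilde\ce\mono\ca$ extends through $\Pretr$ to a morphism $\Pretr(\tilde\ce)\to\ca$ in $\Hqep$. This morphism is fully faithful on $H^0_t$ (true on the generators by construction, and propagated to all of $\Pretr(\tilde\ce)$ by induction on the defining triangles via the five-lemma) and essentially surjective (its image is the triangulated envelope $\gen{\E_1,\dots,\E_n}=\cs$). This identifies $\Pretr(\ce)$ with an enhancement of $\cs$, compatibly with $\fE$, giving the existence part of (2). For the strong uniqueness, I would observe that the construction is canonical up to isomorphism (any two choices of dg lifts $\tilde\E_i$ are isomorphic in $\ca$), so any second enhancement $(\ca',\fE')$ is identified with $\Pretr(\ce)$ in a way compatible with both $\fE$ and $\fE'$ on $H^0_t$.

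For (1), I would pass through the obvious Morita equivalence between $\ce$ (viewed as a multi-object $\K$-linear category) and the one-object $\K$-linear category corresponding to $A=\End_\cs(\oplus_{i=1}^n\E_i)$, so that $\Pretr(\ce)\iso\Pretr(A)$ in $\Hqep$, and hence $\cs\iso H^0_t(\Pretr(A))=\Perf(A)$. The exceptional order forces $\cs(\E_i,\E_j)=0$ for $i>j$, which makes $A$ a finite-dimensional triangular algebra, hence of finite global dimension, so $\Perf(A)=\Db(\fMod A)$. For (3), if $\ce=\K[Q]$ then by definition $A=\K Q$, which for the finite acyclic quiver $Q$ (acyclicity being forced again by the exceptional order) is the classical path algebra, known to be finite-dimensional and left hereditary. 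The main obstacle will be the strong uniqueness in (2): verifying that the chain of quasi-equivalences $\tilde\ce\iso\ce$, $\Pretr(\tilde\ce)\iso\Pretr(\ce)$, and $\Pretr(\tilde\ce)\iso\ca$ can be made simultaneously compatible with the identifications $\fE$ and $\fE'$ on $H^0_t$ requires careful bookkeeping of the (harmless) dependence on the choice of dg lifts.
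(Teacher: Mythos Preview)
Your approach is more hands-on than the paper's, which dispatches every claim by citation: idempotent completeness from \cite{BDFIK}, the semiperfect Krull--Schmidt criterion from \cite{Kr}, part (1) from \cite{Or}, the enhancement $\Pretr(\ce)$ from \cite{Ge}, strong uniqueness from \cite{Lo1}, and heredity of path algebras from \cite{ASS}. Your sketches for Krull--Schmidt, for the existence of the enhancement in (2), and for (3) are fine and essentially unpack those references. Two steps, however, have real problems.

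In (1), the assertion $\Pretr(\ce)\iso\Pretr(A)$ fails: $\Pretr$ as defined in the paper (closure under shifts and cones, with no idempotent splitting) is not a Morita invariant. Concretely, $H^0_t(\Pretr(A))$ is the homotopy category of bounded complexes of finite free $A$-modules, whose $K_0$ is the cyclic group generated by $[A]$, whereas $K_0(\cs)\iso\ZZ^n$; for $n\ge2$ these disagree, so in particular $H^0_t(\Pretr(A))\ne\Perf(A)$. The repair is easy: pass to the idempotent completion $\Perf$ throughout, using that $\Pretr(\ce)\iso\Perf(\ce)$ because $\cs$ is already idempotent complete.

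In (2), strong uniqueness is not mere bookkeeping. Your construction produces, for each enhancement $(\ca,\fE)$, an isomorphism $g_\ca\colon\Pretr(\ce)\to\ca$ in $\Hqe$ with $\fE\comp H^0_t(g_\ca)\rest\ce$ isomorphic to the inclusion. Comparing two enhancements then reduces to showing that any two exact equivalences $H^0_t(\Pretr(\ce))\to\cs$ restricting to the inclusion on $\ce$ are isomorphic as exact functors, equivalently that every exact autoequivalence of $\cs$ fixing $\ce$ is isomorphic to $\id_\cs$. That is a genuine rigidity statement (provable, for instance, by induction along the exceptional filtration), not a harmless dependence on the choice of dg lifts; it is precisely what the paper delegates to \cite{Lo1}.
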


\begin{proof}
It follows immediately from \cite[Lemma 4.8]{BDFIK} that $\cs$ is idempotent complete. Moreover, $\End_\cs(X)$ is a semi-perfect ring (being finite-dimensional over $\K$) for every $X\in\cs$. Hence $\cs$ is a Krull-Schmidt category by \cite[Corollary 4.4]{Kr}.

Assume now that $\cs$ is algebraic.
\begin{enumerate}
    \item See, for instance, \cite[Corollary 1.9]{Or}.
    \item It is a standard fact (a proof can be found in \cite[Lemma 4.3.1 and Remark 4.3.2]{Ge}) that $\Pretr(\ce)$ is an enhancement of $\cs$. Finally, $\cs$ has a strongly unique enhancement by \cite[Theorem 5.61]{Lo1}.\footnote{When $A$ is left (semi)hereditary (so that $\fMod A$ is a hereditary category), one can alternatively invoke \autoref{hered}.}
    \item It is easy to see that $A$ is isomorphic (as a $\K$-algebra) to the path $\K$-algebra of $Q$. Then $A$ is left hereditary by \cite[VII, Theorem 1.7]{ASS}.
\end{enumerate}
\end{proof}

We now assume that $\ce$ is of the form $\K[Q]$, where $Q$ is the quiver $A\mor f B\mor g C$. More explicitly, $n=3$, we write $A$, $B$, $C$ instead of $\E_1$, $\E_2$ and $\E_3$, and
\begin{equation}\label{eq:S}  
\cs(A,B)=\K f, \qquad \cs(B,C)=\K g, \qquad \cs(A,C)=\K h,
\end{equation}
where $h:=g\comp f\ne0$. Note that $\cs$ is as in the statement of \autoref{thmalg}. 

\begin{prop}\label{triangles}
Let $\cs$ be a triangulated category with a full and strong exceptional sequence as in \eqref{eq:S}. Then $\cs$ is algebraic and is uniquely determined, up to isomorphism in $\Ht$. Moreover, extending $A\mor f B\mor g C$ to an octahedron, for which we use the same notation of \eqref{eq:oct} and \eqref{eq:sets}, the following holds.
\begin{enumerate}
\item\label{triangles1} Up to isomorphism, every indecomposable object of $\cs$ is of the form $\sh[n]X$, for a unique $X\in\I$ and a unique $n\in\ZZ$. 
\item\label{triangles2} The indecomposable objects of $\cs$ are exceptional and, up to shift and isomorphism, the only nonzero morphisms between non-isomorphic indecomposable objects are those in $\J$. More precisely, given distinct $X,Y\in\I$ and $n\in\ZZ$, we have $\cs(X,\sh[n]Y)=0$ except in the following cases:
\begin{gather*}
\cs(A,B)=\K f, \qquad \cs(A,C)=\K h, \qquad \cs(B,C)=\K g, \qquad \cs(B,D)=\K f', \\
\cs(B,E)=\K l, \qquad \cs(C,E)=\K h', \qquad \cs(C,F)=\K g', \qquad \cs(D,E)=\K k, \\
\cs(D,\sh A)=\K f'', \qquad \cs(E,F)=\K k', \qquad \cs(E,\sh A)=\K h'', \\
\cs(E,\sh B)=\K l', \qquad \cs(F,\sh B)=\K g'', \qquad \cs(F,\sh D)=\K k''.
\end{gather*}
The composition of morphisms is completely determined by the following relations:
\begin{equation}\label{eq:comp}
\begin{split}
f''=h''\comp k, \qquad g'=k'\comp h', \qquad h=g\comp f, \qquad k''=\sh{f'}\comp g'', \qquad l=h'\comp g=k\comp f', \\
l'=\sh f\comp h''=g''\comp k', \qquad f'\comp f=0, \qquad h''\comp h'=0, \qquad \sh g\comp g''=0, \qquad k'\comp k=0.
\end{split}
\end{equation}
\item\label{triangles3} A triangle in $\cs$ is distinguished if and only if it is (isomorphic to) a finite direct sum of triangles, each of which, up to rotation, is either one among \eqref{eq:tri1}, \eqref{eq:tri2}, \eqref{eq:tri3}, \eqref{eq:tri4}, \eqref{eq:tri5}, \eqref{eq:tri6}(in particular, the octahedron \eqref{eq:oct} is good), or  $\tri X{\id}X{}0{}$ for some $X\in\I$ .
\end{enumerate}
\end{prop}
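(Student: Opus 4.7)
The plan is to first extend the composable pair $A\mor f B\mor g C$ to an octahedron as in \eqref{eq:oct}, producing the six objects in $\I$ and the fourteen morphisms in $\J$. The initial task is to compute every $\cs(X,\sh[n]Y)$ for $X,Y\in\I$ and $n\in\ZZ$. Applying the cohomological functors $\cs(Z,-)$ and $\cs(-,Z)$ for $Z\in\{A,B,C\}$ to the four distinguished triangles \eqref{eq:tri1}--\eqref{eq:tri4}, and using the vanishings dictated by the exceptional sequence, yields long exact sequences that pin down all Homs between $\{A,B,C\}$ and $\{D,E,F\}$. A further pass applying $\cs(D,-)$, $\cs(E,-)$, $\cs(F,-)$ and their contravariant analogues to the same triangles completes the table in part (2). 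Once every listed Hom space is one-dimensional, the composition relations in \eqref{eq:comp} are forced: those of the form $(\farg)\comp(\farg)=0$ are automatic from the triangulated axioms (consecutive morphisms in a distinguished triangle compose to zero), while the nontrivial equalities follow from commutativity of \eqref{eq:oct} combined with one-dimensionality of the target Hom, after rescaling basis elements if necessary.

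For part (1), the strategy is induction using the Krull--Schmidt property of $\cs$ from \autoref{KS} (which does not require algebraicity). Since $\cs=\gen{A,B,C}$, every object arises from iterated cones starting from $A$, $B$, $C$, so it suffices to show that the full subcategory spanned by finite direct sums of shifts of objects in $\I$ is closed under cones. By part (2), any morphism between such objects is a matrix whose entries are scalar multiples of basis elements from \eqref{eq:comp}. A standard matrix-reduction argument (invertible entries allow splitting off summands) reduces the problem to identifying the cones of the individual basis morphisms. For $f,g,h,f',g',k,h',h'',f'',k',g'',k''$ the cone is read off directly from \eqref{eq:tri1}--\eqref{eq:tri4} up to rotation; for $l$ and $l'$ one needs the distinguished triangles \eqref{eq:tri5} and \eqref{eq:tri6}. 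To verify that the latter are indeed distinguished, I apply the criterion of \autoref{goodcrit}: all the required uniqueness-of-factorization hypotheses are immediate from the one-dimensionality of the relevant Homs established in part (2).

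For part (3), I invoke the standard fact that in a Krull--Schmidt triangulated category every distinguished triangle decomposes as a direct sum of indecomposable distinguished triangles (via matrix reduction over local endomorphism rings). Decomposing vertices via part (1) and reading off connecting morphisms via part (2), each indecomposable distinguished triangle must match, up to rotation, either one of the six triangles \eqref{eq:tri1}--\eqref{eq:tri6} or a split triangle $X\mor{\id}X\to 0$: its nonzero connecting morphism is a basis element, and each such morphism has a unique cone by part (1). Finally, algebraicity and uniqueness follow formally. The pretriangulated dg category $\Pretr(\ce)$, with $\ce=\K[Q]$ concentrated in degree zero, provides (via \autoref{KS}) an algebraic category $H^0_t(\Pretr(\ce))$ satisfying the hypotheses; since parts (1)--(3) determine $\cs$ completely up to equivalence (objects, morphisms, triangles all match), $\cs\iso H^0_t(\Pretr(\ce))$ in $\Ht$, establishing both algebraicity and uniqueness. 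The main obstacle throughout is the case-by-case bookkeeping needed to verify the criterion of \autoref{goodcrit} and to reduce arbitrary morphism matrices to basis form; these are tedious but mechanical once the Hom table of part (2) is in hand.
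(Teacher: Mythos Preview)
Your proposal is correct and follows essentially the same route as the paper: compute all Hom spaces by applying cohomological functors to \eqref{eq:tri1}--\eqref{eq:tri4}, verify goodness of the octahedron via the uniqueness criterion of \autoref{goodcrit}, and then perform a matrix-reduction argument to classify indecomposable morphisms (hence cones and triangles), concluding algebraicity and uniqueness by comparison with $H^0_t(\Pretr(\ce))$. The paper organizes the reduction step slightly differently---working explicitly in the Krull--Schmidt category $\Mor(\cs')$ and isolating a factorization lemma (\autoref{deccrit}) rather than speaking of ``invertible entries''---but the substance is the same, and your acknowledgment that the bookkeeping is the real work is accurate.
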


As a direct consequence of \autoref{triangles} (whose proof will be given in \autoref{excseq}) we obtain the following result.

\begin{cor}\label{cor:bij}
Let $\ct$ be a (small) triangulated category. There is a natural bijection between $\Ht(\cs,\ct)$ and the set of isomorphism classes of good octahedra in $\ct$.
\end{cor}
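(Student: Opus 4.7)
The approach is to reduce the problem to tracking a single universal octahedron via the explicit description of $\cs$ in \autoref{triangles}. Since by \autoref{triangles}\eqref{triangles3} the diagram \eqref{eq:oct} in $\cs$ is itself a good octahedron, fix it once and for all, with notation as in \eqref{eq:oct} and \eqref{eq:sets}. The forward map $\Phi$ sends the isomorphism class of an exact functor $\fF\colon\cs\to\ct$ to the isomorphism class of the octahedron in $\ct$ obtained by applying $\fF$ to \eqref{eq:oct}; exactness guarantees that the images of \eqref{eq:tri1}--\eqref{eq:tri6} remain distinguished, so the output is a good octahedron in $\ct$ extending $\fF(f)$ and $\fF(g)$, and isomorphisms of functors induce isomorphisms of the associated octahedra.

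For the inverse map $\Psi$, let $\mathcal{O}$ be a good octahedron in $\ct$ with objects $\{\tilde X:X\in\I\}$ and morphisms $\{\tilde x:x\in\J\}$. Declare $\fF(\sh[n]X):=\sh[n]\tilde X$ for $X\in\I$ and $n\in\ZZ$, and extend to all objects by finite direct sums, which is well-defined thanks to Krull-Schmidt (\autoref{KS}) and \autoref{triangles}\eqref{triangles1}. On morphisms between shifts of indecomposables, \autoref{triangles}\eqref{triangles2} provides an explicit $\K$-basis consisting of (shifts of) the elements of $\J$; set $\fF(x):=\tilde x$ on each such basis element. The composition laws \eqref{eq:comp} in $\cs$ translate verbatim to relations among the tilded morphisms in $\mathcal{O}$: the nonzero equalities follow from the commutativity of the octahedron diagram together with the definition $k'':=\sh{f'}\comp g''$, while the four vanishings $f'\comp f=0$, $h''\comp h'=0$, $\sh g\comp g''=0$, $k'\comp k=0$ come from the standard fact that two consecutive morphisms in a distinguished triangle compose to zero. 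Consequently $\fF$ extends uniquely to a $\K$-linear additive functor, and is exact by \autoref{triangles}\eqref{triangles3}: every distinguished triangle of $\cs$ is a direct sum of shifts and rotations of identity triangles and of \eqref{eq:tri1}--\eqref{eq:tri6}, each of which $\fF$ sends to a distinguished triangle in $\ct$ (the triangles \eqref{eq:tri5}--\eqref{eq:tri6} precisely by the goodness of $\mathcal{O}$).

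The identity $\Phi\comp\Psi=\id$ is tautological from the construction. For $\Psi\comp\Phi=\id$, observe that any exact $\fF$ is pinned down up to isomorphism by its effect on the fixed octahedron: once $\fF(A)$, $\fF(B)$, $\fF(C)$, $\fF(f)$, $\fF(g)$ are chosen, the objects $\fF(D)$, $\fF(E)$, $\fF(F)$ and the remaining morphisms in $\J$ are determined up to unique compatible isomorphism by the distinguished triangles \eqref{eq:tri1}--\eqref{eq:tri3} together with the Hom description of \autoref{triangles}\eqref{triangles2}. The main subtle point is the well-definedness of $\Psi$: one must be sure that the finite list \eqref{eq:comp} really exhausts all composition relations among basis morphisms of $\cs$, which is precisely what the exhaustive Hom-space enumeration in \autoref{triangles}\eqref{triangles2} provides.
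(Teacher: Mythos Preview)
Your argument is essentially the intended one: the paper gives no proof beyond calling the corollary ``a direct consequence of \autoref{triangles},'' and your construction of $\Phi$ and $\Psi$ via the explicit description of indecomposables, Hom-spaces, and distinguished triangles in \autoref{triangles} is exactly how one unpacks that sentence.

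There is, however, one sentence in your $\Psi\comp\Phi=\id$ paragraph that is wrong and in fact contradicts the main thrust of the paper. You write that ``once $\fF(A)$, $\fF(B)$, $\fF(C)$, $\fF(f)$, $\fF(g)$ are chosen, the objects $\fF(D)$, $\fF(E)$, $\fF(F)$ and the remaining morphisms in $\J$ are determined up to unique compatible isomorphism by the distinguished triangles \eqref{eq:tri1}--\eqref{eq:tri3}.'' This is false in $\ct$: \autoref{prop:oct} exhibits two non-isomorphic good octahedra on the \emph{same} pair $(f,g)$, so the rest of the octahedron is not determined by the base data. If it were, the bijection would collapse to a bijection with isomorphism classes of pairs of composable morphisms, and the whole paper would be vacuous. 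The correct justification is the one already contained in your opening clause of that paragraph: by \autoref{triangles}\eqref{triangles1}--\eqref{triangles2}, every object of $\cs$ is a finite direct sum of shifts of objects of $\I$ and every morphism is a matrix of shifts of scalars times elements of $\J$; hence an exact additive functor out of $\cs$ is determined, up to isomorphism of exact functors, by its values on all of $\I$ and all of $\J$---that is, by the \emph{entire} image octahedron, not just its base. Delete the offending sentence and the proof is fine.
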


\section{Proof of \autoref{thmalg}}\label{sec:thmalg}

We start with a result providing a sufficient condition for the existence of an exact functor with no lift.

\begin{lem}\label{noliftcrit}
Let $\ca$ be a dg category and let $\ce:=\K[Q]$, where $Q$ is a quiver. 
\begin{enumerate}
\item The map $H^0\colon\Hqe(\ce,\ca)\to\He(\ce,H^0(\ca))$ is injective.
\end{enumerate}
Suppose that $\ca$ is pretriangulated and that $H^0(\Pretr(\ce))$ and $H^0(\ca)$ have strongly unique enhancements.
\begin{enumerate}
\setcounter{enumi}{1}
\item If $\fF,\fF'\in\Ht(H^0(\Pretr(\ce)),H^0(\ca))$ are such that $\fF\ne\fF'$ and $\fF|_{\ce}=\fF'|_{\ce}\in\He(\ce,H^0(\ca))$, then (at least) one among $\fF$ and $\fF'$ does not have a lift.
\end{enumerate}
\end{lem}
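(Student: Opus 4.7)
The strategy is to establish part (1) as a structural statement about morphisms out of a free dg category, and then deduce part (2) via a short diagram chase exploiting the universal property of the pretriangulated envelope.

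For part (1), the key observation is that $\ce=\K[Q]$, viewed as a dg category concentrated in degree $0$ with trivial differential, is precisely the free $\K$-linear dg category on the graded quiver $Q$ and is therefore cofibrant in Tabuada's model structure on $\dgCat$. Since every object is fibrant in this model structure, each morphism in $\Hqe(\ce,\ca)$ is represented by an actual dg functor, and a dg functor $\ce\to\ca$ is prescribed by an assignment of objects to vertices and closed degree-$0$ morphisms to arrows, with no further relations to impose. The plan is then: given two dg functors $\fF,\fG\colon\ce\to\ca$ that induce the same morphism in $\He(\ce,H^0(\ca))$ via a natural isomorphism $\phi$, lift each component $\phi_X$ to a closed degree-$0$ homotopy equivalence $\tilde\phi_X\colon\fF(X)\to\fG(X)$ in $\ca$. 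The homotopies $\tilde\phi_Y\comp\fF(a)-\fG(a)\comp\tilde\phi_X=d(h_a)$ for each arrow $a\colon X\to Y$ of $Q$ package together into a morphism from a suitable cylinder object on $\ce$ to $\ca$, yielding the equality $\fF=\fG$ in $\Hqe(\ce,\ca)$.

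For part (2), I would argue by contradiction. Suppose both $\fF$ and $\fF'$ admit lifts. By \autoref{stronglift}, the strongly unique enhancement hypothesis lets us take the enhancements to be the tautological ones $(\Pretr(\ce),\id)$ of $H^0(\Pretr(\ce))$ and $(\ca,\id)$ of $H^0(\ca)$. Accordingly, there exist $\fI,\fI'\in\Hqe(\Pretr(\ce),\ca)$ with $H^0_t(\fI)=\fF$ and $H^0_t(\fI')=\fF'$. Precomposing with the Yoneda embedding $\ce\hookrightarrow\Pretr(\ce)$ produces $\fI|_{\ce},\fI'|_{\ce}\in\Hqe(\ce,\ca)$, whose images under $H^0$ are $\fF|_{\ce}$ and $\fF'|_{\ce}$, equal by hypothesis. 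Applying part (1), we conclude $\fI|_{\ce}=\fI'|_{\ce}$ in $\Hqe(\ce,\ca)$. Since $\ca$ is pretriangulated, the adjunction between $\Pretr$ and the inclusion $\Hqep\to\Hqe$ recalled in \autoref{dglifts} yields a bijection $\Hqe(\Pretr(\ce),\ca)\iso\Hqe(\ce,\ca)$ implemented by restriction along the Yoneda embedding. Hence $\fI=\fI'$, whence $\fF=H^0_t(\fI)=H^0_t(\fI')=\fF'$, contradicting the assumption $\fF\neq\fF'$.

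The main obstacle is the technical core of part (1): converting the natural isomorphism in $\He(\ce,H^0(\ca))$ into an actual equality in $\Hqe(\ce,\ca)$ requires explicit control over a cylinder (or path) object in $\dgCat$ so that the data $(\tilde\phi_X,h_a)$ assembles into a coherent homotopy of dg functors. The freeness of $\ce=\K[Q]$ is essential here, because it ensures that there are no higher coherence relations beyond the square homotopies that could obstruct the strictification; once this is in place, part (2) is essentially formal.
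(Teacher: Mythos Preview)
Your proposal is correct and follows essentially the same approach as the paper: for part (1) the paper also uses cofibrancy of $\K[Q]$ to represent morphisms by dg functors and then invokes a cylinder/path-object argument (citing \cite[Lemma 4.3.7]{Ge} rather than spelling it out), and for part (2) the paper uses the same diagram chase combining the $\Pretr$ adjunction bijection with the injectivity from part (1). The only cosmetic difference is that the paper phrases part (2) as ``if $\fF$ lifts then $\fF'$ does not'' instead of your contrapositive ``if both lift then $\fF=\fF'$'', which is logically equivalent.
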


\begin{proof}
\begin{enumerate}
\item One can use the same argument as in the proof of \cite[Theorem 4.3.8]{Ge}. More precisely, using the fact that $\K[Q]$ is cofibrant in $\dgCat$ (see \cite[Proposition 4.3.5]{Ge}), every element of $\Hqe(\ce,\ca)$ can be represented by a dg functor $\ce\to\ca$. Then one can apply \cite[Lemma 4.3.7]{Ge}.
\item Suppose that $\fF$ has a lift. In light of \autoref{stronglift}, this amounts to saying that $\fF=H^0_t(f)$ for some $f\in\Hqep(\Pretr(\ce),\ca)$. Recalling \eqref{eq:comm}, there is a commutative diagram
\[
\xymatrix{
\Hqep(\Pretr(\ce),\ca) \ar[d]^{H^0_t} \ar[r]_{\iso} & \Hqe(\Pretr(\ce),\ca) \ar[r]^-{(\farg)\rest{\ce}}_-{\iso} & \Hqe(\ce,\ca) \ar@{^(->}[d]^{H^0}\\
\Ht(H^0(\Pretr(\ce)),H^0(\ca)) \ar[r] \ar[r] & \He(H^0(\Pretr(\ce)),H^0(\ca)) \ar[r]^-{(\farg)\rest{\ce}} & \He(\ce,H^0(\ca)),
}
\]
where the upper row is a bijection due to the adjunction between $\Pretr$ and the inclusion functor $\Hqep\to\Hqe$, while $H^0$ is injective by the first part. Then $f$ is the unique element of $\Hqep(\Pretr(\ce),\ca)$ which is mapped to $\fF|_{\ce}=\fF'|_{\ce}\in\He(\ce,H^0(\ca))$. Therefore, there is no $f'\in\Hqep(\Pretr(\ce),\ca)$ such that $H^0_t(f')=\fF'$, and this proves that $\fF'$ has no lift.
\end{enumerate}
\end{proof}

\begin{prop}\label{nolift}
Let $\cs$ be as in the statement of \autoref{thmalg} and let $\ct$ be a triangulated category with a strongly unique enhancement. If there exist two non isomorphic good octahedra extending the same pair of composable morphisms in $\ct$, then there exists an exact functor $\cs\to\ct$ without a lift.  
\end{prop}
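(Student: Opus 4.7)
The plan is to combine the classification of exact functors out of $\cs$ given by \autoref{cor:bij} with the non-lifting criterion \autoref{noliftcrit}(2). The hypothesis supplies two non-isomorphic good octahedra in $\ct$ extending a common composable pair $A'\mor{f'} B'\mor{g'} C'$. Via \autoref{cor:bij} these correspond to two elements $\fF,\fF'\in\Ht(\cs,\ct)$ with $\fF\ne\fF'$. Unwinding the bijection, the first three objects and first two arrows of the octahedron associated with $\fF$ are precisely $\fF(A),\fF(B),\fF(C),\fF(f),\fF(g)$, and likewise for $\fF'$; hence $\fF$ and $\fF'$ agree on the full subcategory $\ce\subseteq\cs$ on $\{A,B,C\}$. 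In view of \eqref{eq:S}, $\ce$ is precisely $\K[Q]$ with $Q$ the quiver $A\to B\to C$, which places us in the setting of \autoref{noliftcrit}.

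Next I would choose enhancements in order to invoke \autoref{noliftcrit}(2). By \autoref{KS}(2), $\Pretr(\ce)$ is an enhancement of $\cs$ and $\cs$ has a strongly unique enhancement; by hypothesis $\ct$ has a strongly unique enhancement, so I fix any enhancement $(\ca,\fE)$ of $\ct$ and any isomorphism $H^0_t(\Pretr(\ce))\isomor\cs$ in $\Ht$ coming from $\Pretr(\ce)$ being an enhancement. Transporting $\fF,\fF'$ through these, I obtain two non-isomorphic elements of $\Ht(H^0(\Pretr(\ce)),H^0(\ca))$ whose restrictions to $\ce$ still agree in $\He(\ce,H^0(\ca))$ (restriction to $\ce$ is an operation on isomorphism classes that commutes with pre- and post-composition by isomorphisms in $\Ht$). \autoref{noliftcrit}(2) then yields that at least one of the two transported functors admits no dg lift, and by \autoref{stronglift} the property of admitting a lift is independent of the strongly unique enhancements chosen; so the corresponding functor among $\fF,\fF'$ has no lift in the sense of the proposition.

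The main — and essentially only — conceptual point to verify is that the bijection of \autoref{cor:bij} sends the ``restriction to $\ce$'' datum on $\Ht(\cs,\ct)$ to the ``initial composable pair'' datum on the target side. This is however immediate from the construction of the bijection: after fixing once and for all an octahedron in $\cs$ extending $A\mor f B\mor g C$ (which exists and is good by \autoref{triangles}(\ref{triangles3})), the bijection sends $\fF\in\Ht(\cs,\ct)$ to the octahedron obtained by applying $\fF$ to this fixed one, whose initial composable pair is exactly $\fF(f),\fF(g)$. No further obstacle seems to arise.
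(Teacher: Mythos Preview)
Your proof is correct and follows essentially the same approach as the paper's: both use \autoref{cor:bij} to obtain $\fF\ne\fF'$ in $\Ht(\cs,\ct)$ with $\fF\rest{\ce}=\fF'\rest{\ce}$, identify enhancements via \autoref{KS}(2) and the strongly unique enhancement of $\ct$, and then apply \autoref{noliftcrit}(2). The only minor omission is that invoking \autoref{KS}(2) requires $\cs$ to be algebraic, which the paper cites from \autoref{triangles}; you should add that reference.
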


\begin{proof}
Since $\cs$ is algebraic by \autoref{triangles}, it has a strongly unique enhancement by \autoref{KS}. Recalling also \autoref{stronglift}, we can then assume $\cs=H^0(\Pretr(\ce))$ (where $\ce$ is the full subcategory of $\cs$ with objects $\{A,B,C\}$), and similarly $\ct=H^0(\ca)$, for some pretriangulated dg category $\ca$. By \autoref{cor:bij} the two non isomorphic good octahedra extending the same pair of composable morphisms in $\ct$ correspond to two different elements $\fF,\fF'\in\Ht(\cs,\ct)$ such that $\fF\rest{\ce}=\fF'\rest{\ce}$. Thus, we conclude by \autoref{noliftcrit} that (at least) one among $\fF$ and $\fF'$ does not have a lift.
\end{proof}

Now \autoref{thmalg} follows directly from \autoref{nolift}, taking into account that $\Db(\fMod{\K[x]})$ has a strongly unique enhancement by \autoref{hered} and that there exist two non-isomorphic good octahedra extending the same pair of composable morphisms in $\Dbz(\fMod{\K[x]})$ by \autoref{prop:oct}. 

\begin{remark}
It is not difficult to see that the exact functor corresponding to the octahedron defined by \eqref{eq:oct3} and \eqref{eq:oct3b} has a lift. Hence the octahedron defined by \eqref{eq:oct3} and \eqref{eq:octdata2} does not have a lift.
\end{remark}

\section{Proof of \autoref{thmgeo}}\label{sec:thmgeo}
It is enough to prove that there exists a non-Fourier-Mukai exact functor $\fG\colon\Db(\PP^2)\to\Db(\PP^1)$. Indeed, given such $\fG$, a (left or right) adjoint $\fG'\colon\Db(\PP^1)\to\Db(\PP^2)$ to $\fG$ is also non-Fourier-Mukai (see \cite[Proposition 5.9]{Hu}) and exact (see \cite[1.2 Lemma]{Or}). 

In order to construct $\fG$, we start by considering the functor provided by \autoref{thmalg}. It should be clear that there is an exact equivalence between $\Dbz(\fMod{\K[x]})$ and the full subcategory of $\Db(\PP^1)$ of complexes whose cohomology is supported on a $\K$-rational point. Thus, we obtain an exact functor without a lift $\fF\colon\cs\to\Db(\PP^1)$. Here $\cs$ is as in \autoref{thmalg} and, as before, we let $\ce$ be the full subcategory of $\cs$ with objects $\{A,B,C\}$. We also denote by $\ce'$ the full subcategory of $\Db(\PP^2)$ with objects $\{\so,\so(1),\so(2)\}$. It is well known that $(\so,\so(1),\so(2))$ is a full and strong exceptional sequence in $\Db(\PP^2)$, and it is easy to see that there exist ($\K$-linear) functors $\fS\colon\ce\to\ce'$ and $\fR\colon\ce'\to\ce$ such that $\fR\comp\fS=\id_\ce$. For instance, one can define $\fR$ and $\fS$ as follows. Let $\{a_0,a_1,a_2\}$ and $\{b_0,b_1,b_2\}$ be bases, respectively, of $\Db(\PP^2)(\so,\so(1))$ and $\Db(\PP^2)(\so(1),\so(2))$, where $a_i$ and $b_i$ correspond to multiplication by $x_i$ (regarding $\PP^2$ as $\Proj\K[x_0,x_1,x_2]$). Then we can set
\begin{gather*}
\fS(A):=\so, \qquad \fS(B):=\so(1), \qquad \fS(C):=\so(2), \qquad \fS(f):=a_0, \qquad \fS(g):=b_0, \\
\fR(\so):=A, \qquad \fR(\so(1)):=B, \qquad \fR(\so(2)):=C, \qquad \fR(a_i):=\begin{cases}
f & \text{if $i=0$,} \\
0 & \text{if $i=1,2$,}
\end{cases} \qquad \fR(b_i):=\begin{cases}
g & \text{if $i=0$,} \\
0 & \text{if $i=1,2$.}
\end{cases}
\end{gather*}
By \autoref{KS} we can identify $\cs$ with $H^0(\Pretr(\ce))$ and $\Db(\PP^2)$ with $H^0(\Pretr(\ce'))$. Then we claim that $\fG:=\fF\comp H^0(\Pretr(\fR))$ is an exact functor without a lift, hence non-Fourier-Mukai by \autoref{FMlift}. Indeed, assume on the contrary that $g\in\Hqe(\Pretr(\ce'),\ca)$ (where $\ca$ is an enhancement of $\Db(\PP^1)$) is a lift of $\fG$. From this we obtain the contradiction that $g\comp\Pretr(\fS)\in\Hqe(\Pretr(\ce),\ca)$ is a lift of
\[
\fG\comp H^0(\Pretr(\fS))=\fF\comp H^0(\Pretr(\fR))\comp H^0(\Pretr(\fS))=\fF\comp H^0(\Pretr(\fR)\comp\Pretr(\fS))=\fF,
\]
since obviously $\Pretr(\fR)\comp\Pretr(\fS)=\id_{\Pretr(\ce)}$.

\begin{remark}
It is clear from the proof that \autoref{thmgeo} can be generalized replacing $\PP^1$ with any (smooth projective) curve containing a $\K$-rational point. It is also not difficult to prove that $\PP^2$ can be replaced by any (smooth projective) scheme $X$ such that $\D^b(X)$ contains a (not necessarily full) strong exceptional sequence $(\E'_1,\E'_2,\E'_3)$, provided there exist functors $\fS\colon\ce\to\ce'$ and $\fR\colon\ce'\to\ce$ such that $\fR\comp\fS=\id_\ce$ (where $\ce'$ denotes the full subcategory of $\Db(X)$ with objects $\{\E'_1,\E'_2,\E'_3\}$).
\end{remark}

\appendix \section{Proof of \autoref{triangles}}\label{excseq}

Let $\cs'$ be the full subcategory of $\cs$ whose objects are (finite) direct sums of shifts of objects in $\I$ (eventually we will see that $\cs'=\cs$). 

First we prove that the Hom spaces $\cs(X,\sh[n]Y)$, where $X,Y\in\I$ and $n\in\ZZ$, are as stated in \eqref{triangles2}. By assumption, this is true when $X,Y\in\{A,B,C\}$. Applying the cohomological functor $\cs(A,\farg)$ to the distinguished triangle \eqref{eq:tri1} we obtain an exact sequence
\[
\cs(A,\sh[n]A)\mor{\sh[n]f\comp}\cs(A,\sh[n]B)\to\cs(A,\sh[n]D)\to\cs(A,\sh[n+1]A)\mor{\sh[n+1]f\comp}\cs(A,\sh[n+1]B).
\]
Since $\sh[n]f\comp$ and $\sh[n+1]f\comp$ are isomorphisms, we deduce that $\cs(A,\sh[n]D)=0$. On the other hand, applying $\cs(B,\farg)$ to \eqref{eq:tri1}, we obtain an exact sequence
\[
\cs(B,\sh[n]A)\to\cs(B,\sh[n]B)\mor{\sh[n]{f'}\comp}\cs(B,\sh[n]D)\to\cs(B,\sh[n+1]A).
\]
As $\cs(B,\sh[n]A)=\cs(B,\sh[n+1]A)=0$, we see that $\sh[n]{f'}\comp$ is an isomorphism, and so
\[
\cs(B,\sh[n]D)=\begin{cases}
\K f' & \text{if $n=0$,} \\
0 & \text{if $n\ne0$.}
\end{cases}
\]
Similarly, applying $\cs(\farg,A)$ and $\cs(\farg,B)$ to \eqref{eq:tri1}, we obtain 
\[
\cs(D,\sh[n]A)=\begin{cases}
\K f'' & \text{if $n=1$,} \\
0 & \text{if $n\ne1$,}
\end{cases}
\qquad
\cs((D,\sh[n]B)=0.
\]
Moreover, applying $\cs(D,\farg)$ to \eqref{eq:tri1}, we obtain the exact sequence
\[
\cs(D,\sh[n]B)\to\cs(D,\sh[n]D)\mor{\sh[n]{f''}\comp}\cs(D,\sh[n+1]A)\to\cs(D,\sh[n+1]B).
\]
Since $\cs(D,\sh[n]B)=\cs(D,\sh[n+1]B)=0$, the map $\sh[n]{f''}\comp$ is an isomorphism. By what we have already proved, it follows that $D$ is exceptional. Summing up, so far we have proved that $\cs(X,\sh[n]Y)$ is as stated in \eqref{triangles2} when $X,Y\in\{A,B,D\}$. In a completely similar way, using \eqref{eq:tri2} (respectively \eqref{eq:tri3}), it can be shown that the same is true when $X,Y\in\{A,C,E\}$ (respectively $X,Y\in\{B,C,F\}$).

Applying $\cs(A,\farg)$ (respectively $\cs(\farg,C)$) to \eqref{eq:tri4} and using the already proved equalities $\cs(A,\sh[n]D)=\cs(A,\sh[n]E)=0$ (respectively $\cs(E,\sh[n]C)=\cs(F,\sh[n]C)=0$), we get $\cs(A,\sh[n]F)=0$ (respectively $\cs(D,\sh[n]C)=0$). On the other hand, applying $\cs(\farg,A)$ to \eqref{eq:tri3} (respectively $\cs(C,\farg)$ to \eqref{eq:tri1}) and using the already known equalities $\cs(B,\sh[n]A)=\cs(C,\sh[n]A)=\cs(C,\sh[n]B)=0$, we get $\cs(F,\sh[n]A)=0$ (respectively $\cs(C,\sh[n]D)=0$).

Applying $\cs(B,\farg)$ and $\cs(\farg,B)$ to \eqref{eq:tri2} and using the already known equalities $\cs(B,\sh[n]A)=\cs(C,\sh[n]B)=0$, we obtain
\[
\cs(B,\sh[n]E)=\begin{cases}
\K h'\comp g=\K l & \text{if $n=0$,} \\
0 & \text{if $n\ne0$,}
\end{cases}
\qquad
\cs(E,\sh[n]B)=\begin{cases}
\K\sh f\comp h''=\K l' & \text{if $n=1$,} \\
0 & \text{if $n\ne1$.}
\end{cases}
\]
To conclude the proof of the fact that $\cs(X,\sh[n]Y)$ is as stated in \eqref{triangles2} for every $X,Y\in\I$, it remains to show that this is true when $X,Y\in\{D,E,F\}$ (with $X\ne Y$). Now, applying $\cs(D,\farg)$ to \eqref{eq:tri2} and using the already proved equality $\cs(D,\sh[n]C)=0$, yields an exact sequence
\[
\cs(D,\sh[n]C)=0\to\cs(D,\sh[n]E)\mor{h''\comp}\cs(D,\sh[n+1]A)\to0=\cs(D,\sh[n+1]C).
\]
Thus $h''\comp$ is an isomorphism, which implies (since $f''=h''\comp k$)
\[
\cs(D,\sh[n]E)=\begin{cases}
\K k & \text{if $n=0$,} \\
0 & \text{if $n\ne0$.}
\end{cases}
\]
As we also know that $D$ and $E$ are exceptional, reasoning as in the first part of the proof we conclude that $\cs(X,\sh[n]Y)$ is as stated in \eqref{triangles2} when $X,Y\in\{D,E,F\}$. 

From the above description of the Hom spaces it is clear that, given $X,Y\in\I$ and $m,n\in\ZZ$, $\sh[n]X$ is exceptional (whence indecomposable) and $\sh[n]X\iso\sh[m]Y$ if and only if $n=m$ and $X=Y$. Moreover, each relation in \eqref{eq:comp} either holds by definition, or is due to the fact that the composition of two consecutive morphisms in a distinguished triangle is $0$. A direct check shows that every possible composition of morphisms between indecomposable objects of $\cs'$ is determined by these relations. Note that this proves \eqref{triangles1} and \eqref{triangles2}, once we show that $\cs'=\cs$. In any case $\cs'$ is a Krull-Schmidt category, since the same is true for $\cs$ by \autoref{KS}.

Passing to \eqref{triangles3}, we first observe that the octahedron is good (this follows immediately from \autoref{goodcrit}) and that $\Mor(\cs')$ is a Krull-Schmidt category. Indeed, as in the proof of \autoref{KS}, this is due to the fact that it is idempotent complete (which is a straightforward consequence of $\cs'$ being idempotent complete) and the endomorphism ring of every object has finite dimension over $\K$. Now we claim that a morphism of $\cs'$ is indecomposable in $\Mor(\cs')$ if and only if it is, up to isomorphism and shift, either one of the morphisms appearing in one of the triangles \eqref{eq:tri1}, \eqref{eq:tri2}, \eqref{eq:tri3}, \eqref{eq:tri4}, \eqref{eq:tri5}, \eqref{eq:tri6}, or $\id_X$ or $X\to0$ or $0\to X$ for some $X\in\I$. Assuming the claim, and recalling that a direct sum of distinguished triangles is distinguished and that two distinguished triangles extending the same morphism are isomorphic, we deduce that $\cs'$ is a triangulated subcategory of $\cs$ and that a triangle in $\cs'$ is distinguished if and only if it is of the form stated in \eqref{triangles3}. As $\cs$ is the triangulated envelope of $\ce$ and $\ce\subseteq\cs'$, it is then obvious that $\cs'=\cs$. Moreover, it should be clear that $\cs'$ is determined by $\ce$, up to isomorphism in $\Ht$. Since $H^0(\Pretr(\ce))$ is an algebraic triangulated category with a full and strong exceptional sequence as in \eqref{eq:S}, we conclude that $\cs\iso H^0(\Pretr(\ce))$ is algebraic.

In order to prove the claim, we will need the following easy result.

\begin{lem}\label{deccrit}
Consider a morphism of the form
\begin{equation}\label{eq:dec}
V\oplus V'\mor{u=\bigl(
\begin{smallmatrix}
\alpha & \beta \\ 
\gamma & \delta
\end{smallmatrix}
\bigr)
}W\oplus W'
\end{equation}
in an additive category $\ca$. If $\beta$ and $\gamma$ both factor through $\alpha$ in $\ca$, then $u\iso\alpha\oplus\tilde\delta$ in $\Mor(\ca)$, for some $\tilde\delta\in\ca(V',W')$.
\end{lem}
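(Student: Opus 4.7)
The plan is to exploit the factorization hypothesis directly and perform an explicit ``row and column elimination'' that conjugates $u$ into block-diagonal form. By assumption we may fix morphisms $p \in \ca(V', V)$ and $q \in \ca(W, W')$ such that $\beta = \alpha \comp p$ and $\gamma = q \comp \alpha$.

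Concretely, I would introduce the two triangular morphisms
\[
\phi := \begin{pmatrix} \id_V & -p \\ 0 & \id_{V'} \end{pmatrix} \colon V \oplus V' \to V \oplus V', \qquad \psi := \begin{pmatrix} \id_W & 0 \\ -q & \id_{W'} \end{pmatrix} \colon W \oplus W' \to W \oplus W'.
\]
Each is an automorphism in any additive category: the inverses are the analogous triangular matrices with the opposite sign on the off-diagonal entry. A direct block-matrix calculation, first using $\beta = \alpha \comp p$ to annihilate the $(1,2)$-entry of $u \comp \phi$ and then using $\gamma = q \comp \alpha$ to annihilate the $(2,1)$-entry of $\psi \comp (u \comp \phi)$, yields
\[
\psi \comp u \comp \phi = \begin{pmatrix} \alpha & 0 \\ 0 & \tilde\delta \end{pmatrix} = \alpha \oplus \tilde\delta, \qquad \text{with } \tilde\delta := \delta - q \comp \alpha \comp p.
\]
The pair $(\phi, \psi)$ is thus the required isomorphism $u \isomor \alpha \oplus \tilde\delta$ in $\Mor(\ca)$.

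There is no conceptual obstacle: the only thing to be careful about is the bookkeeping of signs and the order of operations (the column operation on the source side through $\phi$, followed by the row operation on the target side through $\psi$). Invertibility of $\phi$ and $\psi$ holds in any additive category, so the hypothesis that $\ca$ is merely additive (nothing further, like idempotent completeness) is indeed enough.
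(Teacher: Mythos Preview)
Your argument is correct and essentially identical to the paper's: both eliminate the off-diagonal blocks via a pair of triangular automorphisms and arrive at exactly $\tilde\delta = \delta - q\comp\alpha\comp p$. One trivial bookkeeping slip: from $\psi\comp u\comp\phi = \alpha\oplus\tilde\delta$ the isomorphism in $\Mor(\ca)$ is $(\phi^{-1},\psi)$ rather than $(\phi,\psi)$, and indeed $\phi^{-1}=\bigl(\begin{smallmatrix}\id & p\\ 0 & \id\end{smallmatrix}\bigr)$ is precisely the source-side matrix the paper writes down.
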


\begin{proof}
Suppose that $\beta=\alpha\comp\tilde\beta$, $\gamma=\tilde\gamma\comp\alpha$ for some $\tilde\beta\in\ca(V',V)$, $\tilde\gamma\in\ca(W,W')$. Setting $\tilde{\delta}:=\delta-\tilde{\gamma}\comp\alpha\comp\tilde{\beta}$, we have an isomorphism
\[
\xymatrix@C=3em{
V\oplus V'\ar[d]_-{\bigl(\begin{smallmatrix}
1 & \tilde{\beta}\\
0 & 1
\end{smallmatrix}\bigr)} 
\ar[r]^-{u} 
&W\oplus W'\ar[d]^-{\bigl(\begin{smallmatrix}
1 & 0\\
-\tilde{\gamma} & 1
\end{smallmatrix}\bigr)} \\
V\oplus V' \ar[r]^-{\bigl(\begin{smallmatrix}
\alpha & 0\\
0 & \tilde{\delta}
\end{smallmatrix}\bigr)}& W\oplus W'
}
\]
from $u$ to $\alpha\oplus\tilde\delta$ in $\Mor(\ca)$.
\end{proof}

Now we prove the claim. The other implication being very easy, we just need to prove that, if a morphism $u\colon Y\to Z$ of $\cs'$ is indecomposable in $\Mor(\cs')$, then it has the required form. By \eqref{triangles1} we can assume 
\[
Y=\bigoplus_{n\in\ZZ,\,X\in\I}\sh[n]X\otimes V_{X,n}, \qquad Z=\bigoplus_{n\in\ZZ,\,X\in\I}\sh[n]X\otimes W_{X,n},
\]
where $V_{X,n},W_{X,n}\in\fMod\K$ (and only a finite number of them are nonzero). Taking into account \eqref{triangles2}, the components of $u$ that can possibly be nonzero will be denoted as follows. For every $n\in\ZZ$ and every $X\in\I$ the component $\sh[n]X\otimes V_{X,n}\to\sh[n]X\otimes W_{X,n}$ is $\id_{\sh[n]X}\otimes i_{X,n}$ for some $i_{X,n}\in\fMod\K(V_{X,n},W_{X,n})$. For every $n\in\ZZ$ and every $x\colon X\to\sh[m]{X'}$ in $\J$ (where $m=1$ if $x\in\{f'',g'',h'',k'',l'\}$, and otherwise $m=0$) the component $\sh[n]X\otimes V_{X,n}\to\sh[n+m]{X'}\otimes W_{X',n+m}$ is $\sh[n]x\otimes x_n$ for some $x_n\in\fMod\K(V_{X,n},W_{X',n+m})$.

If there exist $n\in\ZZ$ and $X\in\I$ such that $i_{X,n}\ne0$, then we can find $v\in V_{X_n}$ such that $w:=i_{X,n}(v)\ne0$. Choosing decompositions $Y=V\oplus V'$ and $Z=W\oplus W'$ in $\cs'$ with $V=\sh[n]X\otimes\gen v$ and $W=\sh[n]X\otimes\gen w$, the morphism $u$ has the form \eqref{eq:dec} with $\alpha$ an isomorphism. Then clearly $\beta$ and $\gamma$ factor through $\alpha$, and so \autoref{deccrit} implies $u\iso\alpha\iso\id_{\sh[n] X}$ (since $u$ is indecomposable). Hence from now on we can assume 
\begin{equation}\label{eq:dec0}
i_{X,n}=0\text{ for every $n\in\ZZ$ and every $X\in\I$}.
\end{equation}

If there exists $n\in\ZZ$ such that $\ker(f'_n)\not\subseteq\ker(g_n)$, then we can find $v\in\ker(f'_n)$ such that $w:=g_n(v)\ne0$. Choosing decompositions $Y=V\oplus V'$ and $Z=W\oplus W'$ with $V=\sh[n]B\otimes\gen v$ and $W=\sh[n]C\otimes\gen w$, the morphism $u$ has the form \eqref{eq:dec} with $\gamma$ having $0$ component to $\sh[n]D\otimes W_{D,n}$. It follows from \eqref{triangles2} and \eqref{eq:dec0} that $\beta$ and $\gamma$ factor through $\alpha$, whence $u\iso\alpha\iso\sh[n]g$ by \autoref{deccrit}. Thus we can assume $\ker(f'_n)\subseteq\ker(g_n)$, and similarly $\ker(g_n)\subseteq\ker(f'_n)$. Now, if $\ker(f'_n)=\ker(g_n)\ne V_{B,n}$, then we can find $v\in V_{B,n}$ such that $w:=g_n(v)\ne0$ and $w':=f'_n(v)\ne0$. Choosing decompositions $Y=V\oplus V'$ and $Z=W\oplus W'$ with
\begin{gather*}
V=\sh[n]B\otimes\gen v, \qquad W=\sh[n]C\otimes\gen w\oplus\sh[n]D\otimes\gen{w'}, \qquad \sh[n]B\otimes\ker(g_n)\subseteq V', \\
(\sh[n]g\otimes g_n)\big(V'\cap(\sh[n]B\otimes V_{B,n})\big)\subseteq W'\cap(\sh[n]C\otimes W_{C,n}), \\
(\sh[n]{f'}\otimes f'_n)\big(V'\cap(\sh[n]B\otimes V_{B,n})\big)\subseteq W'\cap(\sh[n]D\otimes W_{D,n}),
\end{gather*}
the morphism $u$ has the form \eqref{eq:dec} with $\beta$ having $0$ component from $V'\cap(\sh[n]B\otimes V_{B,n})$. It follows from \eqref{triangles2} and \eqref{eq:dec0} that $\beta$ and $\gamma$ factor through $\alpha$, whence $u\iso\alpha\iso\sh[n]{\bigl(\begin{smallmatrix}
g \\
f'
\end{smallmatrix}\bigr)}$ by \autoref{deccrit}. So we can assume
\begin{equation}\label{eq:dec1}
f'_n=g_n=0\text{ for every $n\in\ZZ$,}
\end{equation}
and in a completely similar way also
\begin{equation}\label{eq:dec2}
h''_n=k'_n=0\text{ for every $n\in\ZZ$.}
\end{equation}

If there exists $n\in\ZZ$ such that $\im(k_n)\not\subseteq\im(h'_n)$, then we can find $v\in V_{D,n}$ such that $w:=k_n(v)\not\in\im(h'_n)$. Choosing decompositions $Y=V\oplus V'$ and $Z=W\oplus W'$ with $V=\sh[n]D\otimes\gen v$, $W=\sh[n]E\otimes\gen w$ and $\sh[n]E\otimes\im(h'_n)\subseteq W'$, the morphism $u$ has the form \eqref{eq:dec} with $\beta$ having $0$ component from $\sh[n]C\otimes V_{C,n}$. It follows from \eqref{triangles2} and \eqref{eq:dec0} that $\beta$ and $\gamma$ factor through $\alpha$, whence $u\iso\alpha\iso\sh[n]k$ by \autoref{deccrit}. Thus we can assume $\im(k_n)\subseteq\im(h'_n)$, and similarly $\im(h'_n)\subseteq\im(k_n)$. Now, if $\im(h'_n)=\im(k_n)\ne0$, then we can find $v\in V_{D,n}$ and $v'\in V_{C,n}$ such that $w:=k_n(v)=h'_n(v')\ne0$. Choosing decompositions $Y=V\oplus V'$ and $Z=W\oplus W'$ with $V=\sh[n]C\otimes\gen{v'}\oplus\sh[n]D\otimes\gen v$ and $W=\sh[n]E\otimes\gen w$, the morphism $u$ has the form \eqref{eq:dec} with $\gamma$ having $0$ component to $W'\cap(\sh[n]E\otimes V_{E,n})$. It follows from \eqref{triangles2} and \eqref{eq:dec0} that $\beta$ and $\gamma$ factor through $\alpha$, whence $u\iso\alpha\iso\sh[n]{\begin{pmatrix}
h' & -k
\end{pmatrix}}$ by \autoref{deccrit}. So we can assume
\begin{equation}\label{eq:dec3}
h'_n=k_n=0\text{ for every $n\in\ZZ$,}
\end{equation}
and in a completely similar way also
\begin{equation}\label{eq:dec4}
g''_n=f_n=0\text{ for every $n\in\ZZ$.}
\end{equation}

If $u\ne0$, then by \eqref{eq:dec0}, \eqref{eq:dec1}, \eqref{eq:dec2}, \eqref{eq:dec3} and \eqref{eq:dec4} there exist $n\in\ZZ$ and $x\in\{f'',g',h,k'',l,l'\}$ (denoted by $x\colon X\to\sh[m]{X'}$) such that $x_n\ne0$. Hence there exists $v\in V_{X,n}$ such that $w:=x_n(v)\ne0$. Choosing decompositions $Y=V\oplus V'$ and $Z=W\oplus W'$ with $V=\sh[n]X\otimes\gen v$ and $W=\sh[n+m]{X'}\otimes\gen w$, the morphism $u$ has the form \eqref{eq:dec} with $\beta$ and $\gamma$ factoring through $\alpha$. As usual, by \autoref{deccrit} this implies $u\iso\alpha\iso\sh[n]x$.

Finally, if $u=0$, then obviously $u$ is isomorphic to either $\sh[n]X\to0$ or $0\to\sh[n]X$ for some $n\in\ZZ$ and some $X\in\I$.

\section{Proof of \autoref{prop:oct}}\label{noniso}

In the following we replace $\Db(\fMod R)$ with the equivalent triangulated category $\ct:=\Kb(\proj R)$, namely the bounded homotopy category of finitely generated projective $R$-modules. Then, for every $n\in\NN$, the $R$-module $R_n$ is identified with the complex
\[
P_n:=(\cdots\to 0\to R\mor{x^n}R\to0\cdots)
\]
in degrees $-1$ and $0$. Given $m,n\in\NN$, we define the following morphisms in $\ct$:
\begin{itemize}
\item $x^i_{m,n}\colon P_m\to P_n$, where $i\ge0,n-m$, is represented by the morphism of complexes:
\begin{equation}\label{eq:x^i}
\xymatrix{
0 \ar[r] & R \ar[r]^-{x^m} \ar[d]_{x^{i+m-n}} & R \ar[d]^{x^i} \ar[r] & 0 \\
0 \ar[r] & R \ar[r]^-{x^n} & R \ar[r] & 0.
}
\end{equation}
Note that $x^i_{m,n}=0$ for $i\ge n$, since \eqref{eq:x^i} is null-homotopic in that case. Obviously $x^i_{m,n}$ can be identified with the morphism denoted by $x^i\colon R_m\to R_n$ before.
\item $y^i_{m,n}\colon P_m\to\sh{P_n}$, where $i\le m$, is represented by the morphism of complexes:
\begin{equation}\label{eq:y^i}
\xymatrix{
0 \ar[r] & 0 \ar[r] & R \ar[r]^-{x^m} \ar[d]^{x^{m-i}} & R \ar[r] & 0 \\
0 \ar[r] & R \ar[r]^-{-x^n} & R \ar[r] & 0 \ar[r] & 0.
}
\end{equation}
Note that $y^i_{m,n}=0$ for $i\le\max\{0,m-n\}$, since \eqref{eq:y^i} is null-homotopic in that case. It is also easy to see that $y^m_{m,n}$ can be identified with the morphism denoted by $y\colon R_m\to\sh{R_n}$ before.
\end{itemize}

We first prove some lemmas.

\begin{lem}\label{triRn}
Let $l$, $m$ and $n$ be positive integers.
\begin{enumerate}
\item\label{triRn1} $\{x^i_{m,n}\st0,n-m\le i<n\}$ is a basis of $\ct(P_m,P_n)$. Given $0,m-l\le i<m$ and $0,n-m\le j<n$, we have
\[
x^j_{m,n}\comp x^i_{l,m}=\begin{cases}
x^{i+j}_{l,n} & \text{if $i+j<n$} \\
0 & \text{otherwise}.
\end{cases}
\]
\item\label{triRn2} $\{y^i_{m,n}\st0,m-n<i\le m\}$ is a basis of $\ct(P_m,\sh{P_n})$. Given $0,m-l\le i<m$ and $0,m-n<j\le m$, we have
\[
y^j_{m,n}\comp x^i_{l,m}=\begin{cases}
y^{j-i}_{l,n} & \text{if $j-i>0,l-n$} \\
0 & \text{otherwise}.
\end{cases}
\]
Given $0,l-m<i\le l$ and $0,n-m\le j<n$, we have
\[
\sh{x^j_{m,n}}\comp y^i_{l,m}=\begin{cases}
y^{i-j}_{l,n} & \text{if $i-j>0,l-n$} \\
0 & \text{otherwise}.
\end{cases}
\]
\item\label{triRn3} Given $0,n-m\le i<n$, the triangle
\begin{equation}\label{eq:triangle}
\tri{P_m}{x^i_{m,n}}{P_n}{\Bigl(\begin{smallmatrix}
y^n_{n,m-n+i} \\
x^0_{n,i}
\end{smallmatrix}\Bigr)}{\sh{P_{m-n+i}}\oplus P_i}{(\begin{smallmatrix}
-\sh{x^{n-i}_{m-n+i,m}} &\quad y^i_{i,m}
\end{smallmatrix})}
\end{equation}
is distinguished in $\ct$.
\item\label{triRn4} The triangles
\begin{gather}
\label{eq:good1}
\tri{P_3}{\Bigl(\begin{smallmatrix}
y^2_{3,2} \\
x^1_{3,2}
\end{smallmatrix}\Bigr)}{\sh{P_2}\oplus P_2}{\Biggl(
\begin{smallmatrix}
\sh{x^0_{2,1}} & 0 \\
0 & x^0_{2,1} \\
\sh{x^1_{2,3}} & - y^2_{2,3}
\end{smallmatrix}\Biggr)
}{\sh{P_1}\oplus P_1\oplus\sh{P_3}}{(
\begin{smallmatrix}
-\sh{x^2_{1,3}} &\quad y^1_{1,3} &\quad \sh{x^1_{3,3}}   
\end{smallmatrix})
}, \\
\label{eq:good2}
\tri {P_3}{\Biggl(
\begin{smallmatrix}
x^1_{3,3} \\
y^3_{3,1} \\
x^0_{3,1}
\end{smallmatrix}\Biggr)
}{P_3\oplus \sh{P_1}\oplus P_1}{\Bigl(
\begin{smallmatrix}
y^3_{3,2} & -\sh{x^1_{1,2}} & 0 \\
x^0_{3,2} & 0 & -x^1_{1,2}
\end{smallmatrix}\Bigr)
}{\sh{P_2}\oplus P_2}{(\begin{smallmatrix}
\sh{-x^2_{2,3}} &\quad y^1_{2,3}
\end{smallmatrix})}
\end{gather}
are distinguished in $\ct$.
\end{enumerate}
\end{lem}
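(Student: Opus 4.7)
My plan is a direct chain-level computation in $\ct=\Kb(\proj R)$. For parts \eqref{triRn1} and \eqref{triRn2}, the key point is that $P_m$ is a very simple two-term complex, so a chain map from $P_m$ to $P_n$ (resp.\ to $\sh{P_n}$) is captured by two elements of $R$ subject to one commutation relation (resp.\ a single element of $R$ with no constraint), while null-homotopies are parametrised by one element of $R$ (resp.\ two elements of $R$). Working out these linear data over the integral domain $R$ reduces the $\Hom$-spaces $\ct(P_m,P_n)$ and $\ct(P_m,\sh{P_n})$ to explicit quotients of $R$ of $\K$-dimension $\min(m,n)$; in view of \eqref{eq:x^i} and \eqref{eq:y^i} the sets $\{x^i_{m,n}\}$ and $\{y^i_{m,n}\}$ visibly map to monomial bases. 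The four composition identities are then an exercise in multiplying the chain-level representatives and recognising when the product becomes null-homotopic, i.e.\ lies in the appropriate power of $(x)$.

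For \eqref{triRn3}, my plan is to compute the mapping cone $\cone{x^i_{m,n}}$ directly: it is the three-term complex of free $R$-modules of ranks $(1,2,1)$ in degrees $(-2,-1,0)$ with differentials
\[
\Bigl(\begin{smallmatrix} -x^m \\ x^{i+m-n} \end{smallmatrix}\Bigr) \quad \text{and} \quad (\begin{smallmatrix} x^i & x^n \end{smallmatrix}).
\]
An invertible $2\times 2$ change of basis over $R$ in the middle position---essentially $\Bigl(\begin{smallmatrix} 0 & -1 \\ 1 & x^{n-i} \end{smallmatrix}\Bigr)$, which has determinant $1$---produces an \emph{isomorphism of complexes} from $\cone{x^i_{m,n}}$ to $\sh{P_{m-n+i}}\oplus P_i$. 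Tracing the canonical cone inclusion $P_n\to\cone{x^i_{m,n}}$ and projection $\cone{x^i_{m,n}}\to\sh{P_m}$ through this change of basis and comparing with the definitions \eqref{eq:x^i} and \eqref{eq:y^i} reads off precisely the morphisms $\Bigl(\begin{smallmatrix} y^n_{n,m-n+i} \\ x^0_{n,i} \end{smallmatrix}\Bigr)$ and $(\begin{smallmatrix} -\sh{x^{n-i}_{m-n+i,m}} & y^i_{i,m} \end{smallmatrix})$ that appear in \eqref{eq:triangle}.

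For \eqref{triRn4}, the same strategy applies, with slightly larger matrices. The first morphism $l=\Bigl(\begin{smallmatrix} y^2_{3,2} \\ x^1_{3,2} \end{smallmatrix}\Bigr)\colon P_3\to\sh{P_2}\oplus P_2$ of \eqref{eq:good1} is recognised as the composition $h'\comp g$, where $g$ and $h'$ come from the instances $(m,n,i)=(3,3,1)$ and $(3,3,2)$ of part \eqref{triRn3}. I would then write $\cone{l}$ as a three-term complex $R^2\to R^3\to R$ with differentials dictated by the components of $l$, and exhibit an invertible change of basis in all three degrees identifying it with $\sh{P_1}\oplus P_1\oplus\sh{P_3}$; the middle basis is essentially forced by compatibility with the final differential $(0,x^1,0)$ of the target. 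Pushing the canonical cone inclusion and projection through this isomorphism and simplifying using the composition rules of \eqref{triRn1} and \eqref{triRn2} yields the remaining two morphisms of \eqref{eq:good1}. The argument for \eqref{eq:good2} is entirely analogous, with the roles of ``source'' and ``target'' of the extra triangle swapped.

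The main obstacle is sign- and convention-tracking. Mapping cones, shifts and the basis elements $y^i_{m,n}$ all carry signs, and in part \eqref{triRn4} the three change-of-basis matrices must be chosen in a mutually consistent way across the three degrees of the cone. Once the correct invertible matrices are written down, verification that they intertwine the differentials and carry the canonical morphisms to those appearing in the statement is completely formal.
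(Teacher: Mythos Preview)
Your proposal is correct and follows essentially the same approach as the paper: the paper treats parts \eqref{triRn1} and \eqref{triRn2} as routine, and for \eqref{triRn3} and \eqref{triRn4} it likewise writes the mapping cone explicitly as a three-term complex of free modules and exhibits an invertible change of basis (in each degree) identifying it with the stated direct sum, then reads off the second and third maps of the triangle by transporting the canonical cone inclusion and projection. The only difference is cosmetic: the paper's explicit middle isomorphism in part \eqref{triRn3} differs from yours by a sign (and it also adjusts the degree~$-2$ term by $-1$), and in part \eqref{triRn4} the paper simply computes $\cone{l}$ directly without invoking the factorisation $l=h'\comp g$.
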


\begin{proof}
Parts \eqref{triRn1} and \eqref{triRn2} are easy and probably known, so we just sketch the proof of \eqref{triRn3} and \eqref{triRn4}. 

As for \eqref{triRn3}, there is a distinguished triangle
\begin{equation}\label{eq:triangle1}
\xymatrix{
P_m \ar[r]^-{x^i_{m,n}} & P_n \ar[r]^-j & \cone{x^i_{m,n}} \ar[r]^-q & \sh{P_m}
}
\end{equation}
in $\ct$, where $j$ and $q$ are given by the following morphisms of complexes:
\begin{equation*}
\xymatrix{
0\ar[r]&  0\ar[d] \ar[rr] && R\ar[d]^-{\bigl(\begin{smallmatrix}
    0\\
    1
\end{smallmatrix}\bigr)} \ar[rr]^{x^n} && R \ar@{=}[d]\ar[r]& 0 \\
0\ar[r]& R\ar@{=}[d]\ar[rr]^-{\bigl(\begin{smallmatrix}
    -x^m\\
    x^{i+m-n}
\end{smallmatrix}\bigr)} && R\oplus R\ar[d]^-{( \begin{smallmatrix}
    1 & 0
\end{smallmatrix})} \ar[rr]^-{(\begin{smallmatrix}
    x^i& x^n
\end{smallmatrix})} && R\ar[d]\ar[r]& 0\\
0\ar[r]&  R \ar[rr]^-{-x^m} && R\ar[rr] && 0 \ar[r]& 0. 
}
\end{equation*}
Since $\cone{x^i_{m,n}}$ is isomorphic to $\sh{P_{m-n+i}}\oplus P_i$ via the isomorphism
\[
\xymatrix{
0\ar[r]& R\ar@<-0.5ex>[d]_-{-1} \ar[rr]^-{\bigl(\begin{smallmatrix}
    -x^m\\
    x^{i+m-n}
\end{smallmatrix}\bigr)} && R\oplus R \ar@<0.5ex>[d]^-{\bigl(\begin{smallmatrix}
    0 & 1 \\
    1 & x^{n-i}
\end{smallmatrix}\bigr)} \ar[rr]^-{(\begin{smallmatrix}
    x^i& x^n
\end{smallmatrix})}
&& R\ar[r]\ar@{=}[d]&0\\
0\ar[r]& R \ar@<-0.5ex>[u]_-{-1} \ar[rr]_-{\bigl(\begin{smallmatrix}
-x^{i+m-n} \\
0
\end{smallmatrix}\bigr)} && R\oplus R \ar@<0.5ex>[u]^-{\bigl(\begin{smallmatrix}
    -x^{n-i} & 1\\
    1 & 0
\end{smallmatrix}\bigr)} \ar[rr]_-{(\begin{smallmatrix}
0 & x^i
\end{smallmatrix})}
&& R\ar[r]&0,
}
\]
it is easy to conclude that the triangle \eqref{eq:triangle} is isomorphic to \eqref{eq:triangle1}, hence it is distinguished, as well.

Coming to \eqref{triRn4}, we consider only \eqref{eq:good1}, as \eqref{eq:good2} is similar. Setting $u:=\Bigl(\begin{smallmatrix}
y^2_{3,2} \\
x^1_{3,2}
\end{smallmatrix}\Bigr)$, there is a distinguished triangle
\begin{equation}\label{eq:triangle2}
\xymatrix{
P_3 \ar[r]^-u & {\sh{P_2}\oplus P_2} \ar[r]^-{j}& \cone u \ar[r]^-{q}& \sh{P_3}
}
\end{equation}
in $\ct$, where $j$ and $q$ are given by the following morphisms of complexes:
\[
\xymatrix{
0\ar[r]& R\ar[d]^-{\bigl( 
\begin{smallmatrix}
0\\
1
\end{smallmatrix}
\bigr)} \ar[rr]^-{\bigl( 
\begin{smallmatrix}
-x^2\\
0
\end{smallmatrix}
\bigr)} && R\oplus R \ar[rr]^{( 
\begin{smallmatrix}
0 & x^2
\end{smallmatrix}
)} \ar[d]^-{\Bigl( 
\begin{smallmatrix}
0 & 0\\
1 & 0\\
0 & 1
\end{smallmatrix}
\Bigr)}\ar[rr]&& R\ar@{=}[d] \ar[r]& 0 \\
0\ar[r]& R\oplus R \ar[rr]^-{\biggl(\begin{smallmatrix}
-x^3&0\\
x& -x^2\\
x^2&0
\end{smallmatrix}\biggr)}\ar[d]^-{(\begin{smallmatrix}
1 & 0
\end{smallmatrix}
)}  && R\oplus R\oplus R\ar[d]^-{( \begin{smallmatrix}
1 & 0 &0
\end{smallmatrix}
)} \ar[rr]^-{(\begin{smallmatrix}
x& 0& x^2
\end{smallmatrix})} 
&& R\ar[r]\ar[d]&0\\
0\ar[r]& R \ar[rr]^{-x^3} && R \ar[rr]&& 0 \ar[r]& 0.
}
\]
Since $\cone u$ is isomorphic to $\sh{P_1}\oplus P_1\oplus\sh{P_3}$ via the isomorphism
\[
\xymatrix{
0 \ar[r] & R\oplus R \ar@<0.5ex>[d]^{\bigl(\begin{smallmatrix}
    -1 & x \\
    0 &1
\end{smallmatrix}\bigr)} \ar[rr]^-{\biggl(\begin{smallmatrix}
-x^3&0\\
x& -x^2\\
x^2&0
\end{smallmatrix}\biggr)}  && R\oplus R\oplus R \ar@<0.5ex>[d]^{\Bigl(\begin{smallmatrix}
0 & 1 & 0\\
1 & 0 & x\\
0 & x &-1
\end{smallmatrix}\Bigr)} \ar[rr]^-{(\begin{smallmatrix}
x & 0 & x^2
\end{smallmatrix})} 
&& R\ar[r]\ar@{=}[d] & 0 \\
0 \ar[r] & R\oplus R\ar@<0.5ex>[u]^{\bigl(\begin{smallmatrix}
    -1 & x \\
    0 &1
\end{smallmatrix}\bigr)} \ar[rr]_-{\Bigl(\begin{smallmatrix}
-x & 0 \\
0 & 0 \\
0 & -x^3
\end{smallmatrix}\Bigr)} && R\oplus R\oplus R \ar@<0.5ex>[u]^{\Bigl(\begin{smallmatrix}
-x^2 & 1 & x \\
1 & 0 & 0 \\
x & 0 & -1
\end{smallmatrix}\Bigr)} \ar[rr]_-{(\begin{smallmatrix}
0 & x & 0
\end{smallmatrix})} 
&& R\ar[r]&0,
}
\]
it is easy to conclude that the triangle \eqref{eq:good1} is isomorphic to \eqref{eq:triangle2}, hence it is distinguished, as well.
\end{proof}

\begin{lem}\label{gooddefor}
Assume that \eqref{eq:oct} is an octahedron and that there exists $\epsilon\colon E\to E$ such that $\epsilon^2=0$ and
\begin{equation}
\label{eq:epsilon}
\epsilon\comp l=0, \qquad l'\comp\epsilon=0, \qquad h''\comp\epsilon\comp k=0, \qquad k'\comp\epsilon\comp h'=0.
\end{equation}
Then \eqref{eq:oct} remains an octahedron if we replace $k$ and $k'$ with
\begin{gather*}
\tilde{k}:=(1+\epsilon)\comp k, \qquad \tilde{k}':=k'\comp (1-\epsilon)
\end{gather*}
(keeping all the other maps unchanged).
Moreover, this new octahedron is good if the original one is good and there exist two nilpotent morphisms $\epsilon',\epsilon'':E\to E$ such that:
\begin{gather}
\label{eq:epsilon'}
\epsilon'\comp l=0, \qquad h''\comp\epsilon'=0, \qquad k'\comp(\epsilon'-\epsilon)=0, \\
\label{eq:epsilon''}
l'\comp \epsilon''=0, \qquad \epsilon''\comp h'=0, \qquad (\epsilon''-\epsilon)\comp k=0.
\end{gather}
\end{lem}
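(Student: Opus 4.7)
The plan is to verify commutativity of the modified diagram and then to match each distinguished triangle in question (the fourth octahedron triangle, and the two good-triangle conditions) with its original counterpart via an isomorphism of triangles whose only non-trivial component is a well-chosen automorphism of $E$.

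For the octahedron part, the four relations in \eqref{eq:epsilon} are precisely what is needed for the commutativity conditions of \eqref{eq:oct} to remain valid after the replacement: each of $\tilde k\comp f' = l$, $h''\comp\tilde k = f''$, $\tilde k'\comp h' = g'$ and $g''\comp\tilde k' = l'$ reduces, after expanding the $(1\pm\epsilon)$ factor, to exactly one of those four hypotheses. To show that $\tri D{\tilde k}E{\tilde k'}F{k''}$ is distinguished I would exploit $\epsilon^2 = 0$, which makes $1+\epsilon$ an automorphism of $E$ with inverse $1-\epsilon$: then $(1+\epsilon)\comp k = \tilde k$ by definition and $\tilde k'\comp(1+\epsilon) = k'(1-\epsilon^2) = k'$, which is an isomorphism of triangles from the known distinguished triangle $\tri DkE{k'}F{k''}$ to the modified one.

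For the goodness statement, I would repeat the same idea twice. For \eqref{eq:tri5} take $\phi := 1-\epsilon'$, an automorphism of $E$ by nilpotency; the identities $\phi\comp l = l$, $h''\comp\phi = h''$ and $k'\comp\phi = k' - k'\comp\epsilon' = k'(1-\epsilon) = \tilde k'$ correspond respectively to the hypotheses $\epsilon'\comp l = 0$, $h''\comp\epsilon' = 0$ and $k'\comp(\epsilon'-\epsilon) = 0$, so the diagram with $\phi$ on $E$ and identities on $B$, $F\oplus\sh A$, $\sh B$ is an isomorphism of triangles from the modified triangle to the original good one, making the former distinguished. Symmetrically, for \eqref{eq:tri6} the choice $\phi := 1+\epsilon''$ works: $\phi\comp h' = h'$ comes from $\epsilon''\comp h' = 0$, $\phi\comp k = k + \epsilon''\comp k = k + \epsilon\comp k = \tilde k$ comes from $(\epsilon''-\epsilon)\comp k = 0$, and $l'\comp\phi = l'$ comes from $l'\comp\epsilon'' = 0$, giving the required isomorphism of triangles.

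There is no real obstacle here: the three sets of hypotheses \eqref{eq:epsilon}, \eqref{eq:epsilon'}, \eqref{eq:epsilon''} are tailored exactly to produce the three automorphisms $1+\epsilon$, $1-\epsilon'$, $1+\epsilon''$ of $E$ intertwining the relevant pairs of triangles; the entire proof reduces to unwinding these definitions and checking commutativity of the two squares in each isomorphism of triangles.
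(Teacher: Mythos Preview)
Your proposal is correct and follows essentially the same approach as the paper: use the automorphism $1+\epsilon$ of $E$ to carry the original fourth triangle to the modified one, and then the automorphisms $1-\epsilon'$ and $1+\epsilon''$ (with identities on the other vertices) to identify the modified triangles \eqref{eq:tri5} and \eqref{eq:tri6} with their original distinguished versions. The verifications you outline are exactly those in the paper's proof.
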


\begin{proof}
Since $\epsilon^2=0$, $1+\epsilon$ is an automorphism  of $E$ with inverse $1-\epsilon$. It follows that the triangle $\tri D {\tilde{k}} E {\tilde{k}'} F {k''}$ is isomorphic to the distinguished triangle $\tri D {k} E {k'} F {k''}$, and so it is distinguished. The commutativity of all the squares involved follows from \eqref{eq:epsilon}.

It remains to prove that the new octahedron is good under the extra assumptions. First observe that, since $\epsilon'$ and $\epsilon''$ are nilpotent, $1-\epsilon'$ and $1+\epsilon''$ are automorphisms of $E$.
By \eqref{eq:epsilon'} there is an isomorphism of triangles
\begin{equation*}
\xymatrix{
B\ar[r]^l\ar@{=}[d]&E \ar[d]_{1-\epsilon'} \ar[r]^-{\bigl(\begin{smallmatrix}
\tilde{k}'  \\
-h''
\end{smallmatrix}\bigr)} & F\oplus \sh{A}\ar@{=}[d]\ar[rr]^-{(\begin{smallmatrix}
g'' & \sh f
\end{smallmatrix})}&& \sh{B}\ar@{=}[d]
\\
B\ar[r]^l&E \ar[r]^-{\bigl(\begin{smallmatrix}
{k}'  \\
-h''
\end{smallmatrix}\bigr)} & F\oplus \sh{A}\ar[rr]^-{(\begin{smallmatrix}
g'' & \sh f
\end{smallmatrix})}&& \sh{B}.
}
\end{equation*}
In the same vein, by \eqref{eq:epsilon''} there is an isomorphism of triangles
\[
\xymatrix{
B\ar@{=}[d]\ar[r]^-{\bigl(\begin{smallmatrix}
g  \\
f'
\end{smallmatrix}\bigr)}&C\oplus D\ar@{=}[d] \ar[rr]^-{(\begin{smallmatrix}
h' & -{k}
\end{smallmatrix})} && E\ar[r]^-{l'}\ar[d]^{1+\epsilon''}& \sh{B}\ar@{=}[d]\\
B\ar[r]^-{\bigl({\begin{smallmatrix}
g  \\
f'
\end{smallmatrix}\bigr)}}&C\oplus D \ar[rr]^-{(\begin{smallmatrix}
h' & -\tilde{k}
\end{smallmatrix})} && E\ar[r]^-{l'}& \sh{B}
}
\]
and we are done.
\end{proof}

We are now ready to prove \autoref{prop:oct}. First observe that, with the notation of this section, we have
\begin{gather*}
A=B=C=P_3, \qquad F=D=\sh{P_1}\oplus P_1, \qquad E=\sh{P_2}\oplus P_2,\\
f=g=x^1_{3,3}, \qquad f'=\Bigl(\begin{smallmatrix}
y^3_{3,1} \\
x^0_{3,1}
\end{smallmatrix}\Bigr),
\qquad h'=\Bigl(\begin{smallmatrix}
y^3_{3,2} \\
x^0_{3,2}
\end{smallmatrix}\Bigr),
\qquad g''=(\begin{smallmatrix}
-\sh{x^2_{1,3}} &\quad y^1_{1,3}
\end{smallmatrix}), \qquad h''=(\begin{smallmatrix}
-\sh{x^1_{2,3}} &\quad y^2_{2,3}
\end{smallmatrix}), \\
k=\Bigl(\begin{smallmatrix}
\sh{x^1_{1,2}} & 0\\
0 & x^1_{1,2}
\end{smallmatrix}\Bigr), \qquad k'=\Bigl(\begin{smallmatrix}
\sh{x^0_{2,1}} & 0\\
0 & x^0_{2,1}
\end{smallmatrix}\Bigr), \qquad \tilde{k}=\Bigl(\begin{smallmatrix}
\sh{x^1_{1,2}} & y^1_{1,2} \\
0 & x^1_{1,2}
\end{smallmatrix}\Bigr), \qquad \tilde{k}'=\Bigl(\begin{smallmatrix}
\sh{x^0_{2,1}} & -y^2_{2,1} \\
0 & x^0_{2,1}
\end{smallmatrix}\Bigr).
\end{gather*}
First we prove that the morphisms in \eqref{eq:oct3} and in \eqref{eq:oct3b} give rise to a good octahedron. Using \eqref{triRn1} and \eqref{triRn2} of \autoref{triRn} it is immediate to see that
\begin{gather*}
f''=(\begin{smallmatrix}
\sh{-x^2_{1,3}} &\quad y^1_{1,3}
\end{smallmatrix}), \qquad h=x^2_{3,3}, \qquad g'=\Bigl(\begin{smallmatrix}
y^3_{3,1} \\
x^0_{3,1}
\end{smallmatrix}\Bigr), \qquad k''=\Bigl(\begin{smallmatrix}
\sh{-y^1_{1,1}} & 0 \\
0 & y^1_{1,1}
\end{smallmatrix}\Bigr), \\
l=\Bigl(\begin{smallmatrix}
y^2_{3,2} \\
x^1_{3,2}
\end{smallmatrix}\Bigr), \qquad l'=(\begin{smallmatrix}
\sh{-x^2_{2,3}} &\quad y^1_{2,3}
\end{smallmatrix}).
\end{gather*}
Hence the triangles \eqref{eq:tri1}, \eqref{eq:tri2}, \eqref{eq:tri3} and \eqref{eq:tri4} become
\begin{gather*}
\tri{P_3}{x^1_{3,3}}{P_3}{\Bigl(\begin{smallmatrix}
y^3_{3,1} \\
x^0_{3,1}
\end{smallmatrix}\Bigr)}{\sh{P_1}\oplus P_1}{(\begin{smallmatrix}
\sh{-x^2_{1,3}} &\quad y^1_{1,3}
\end{smallmatrix})}, \\
\tri{P_3}{x^2_{3,3}}{P_3}{\Bigl(\begin{smallmatrix}
y^3_{3,2} \\
x^0_{3,2}
\end{smallmatrix}\Bigr)}{\sh{P_2}\oplus P_2}{(\begin{smallmatrix}
\sh{-x^1_{2,3}} &\quad y^2_{2,3}
\end{smallmatrix})}, \\
\tri{P_3}{x^1_{3,3}}{P_3}{\Bigl(\begin{smallmatrix}
y^3_{3,1} \\
x^0_{3,1}
\end{smallmatrix}\Bigr)}{\sh{P_1}\oplus P_1}{(\begin{smallmatrix}
\sh{-x^2_{1,3}} &\quad y^1_{1,3}
\end{smallmatrix})}, \\
\tri{\sh{P_1}\oplus P_1}{\Bigl(\begin{smallmatrix}
\sh{x^1_{1,2}} & 0 \\
0 & x^1_{1,2}
\end{smallmatrix}\Bigr)}{\sh{P_2}\oplus P_2}{\Bigl(\begin{smallmatrix}
\sh{x^0_{2,1}} & 0 \\
0 & x^0_{2,1}
\end{smallmatrix}\Bigr)}{\sh{P_{1}}\oplus P_1}{\Bigl(\begin{smallmatrix}
\sh{-y^1_{1,1}} & 0 \\
0 & y^1_{1,1}
\end{smallmatrix}\Bigr)}.
\end{gather*}
As they are all distinguished by \eqref{triRn3}, we really have an octahedron. Moreover, \eqref{eq:tri5} and \eqref{eq:tri6} become precisely \eqref{eq:good1} and \eqref{eq:good2}. Hence \eqref{triRn4} directly implies that the octahedron is good.

The fact that the morphisms \eqref{eq:oct3} and \eqref{eq:octdata2} form a good octahedron follows easily from \autoref{gooddefor}, taking
\begin{equation}
\epsilon:={\Bigl(\begin{smallmatrix}
0 & y^2_{2,2}\\
0 & 0
\end{smallmatrix}\Bigr)},
\qquad
\epsilon':={\Bigl(\begin{smallmatrix}
0 & y^2_{2,2}\\
0 & x^1_{2,2}
\end{smallmatrix}\Bigr)},
\qquad
\epsilon'':={\Bigl(\begin{smallmatrix}
\sh{-x^1_{2,2}} & y^2_{2,2}\\
0 & 0
\end{smallmatrix}\Bigr)}.
\end{equation}
Finally, we suppose there exists an isomorphism $\phi$ between the two octahedra. Without loss of generality, such an isomorphism has the form
\begin{gather*}
\phi_A=\alpha_A x^0_{3,3}+\beta_A x^1_{3,3}+\gamma_A x^2_{3,3},\qquad 
\phi_B=\alpha_B x^0_{3,3}+\beta_B x^1_{3,3}+\gamma_B x^2_{3,3},\\ 
\phi_C=\alpha_C x^0_{3,3}+\beta_C x^1_{3,3}+\gamma_C x^2_{3,3},\qquad
\phi_D={\Bigl(\begin{smallmatrix}
    \alpha'_D \sh{x^0_{1,1}} & \lambda y^1_{1,1} \\
    0 & \alpha_D x^0_{1,1}
\end{smallmatrix}\Bigr)},\\
\phi_E={\Bigl(\begin{smallmatrix}
    \alpha'_E \sh{x^0_{2,2}}+\beta'_E \sh{x^1_{2,2}} & \mu y^2_{2,2}+\nu y^1_{2,2} \\
    0 & \alpha_E x^0_{2,2}+\beta_E x^1_{2,2}
\end{smallmatrix}\Bigr)}, \qquad
\phi_F={\Bigl(\begin{smallmatrix}
    \alpha'_F \sh{x^0_{1,1}} & \xi y^1_{1,1} \\
    0 & \alpha_F x^0_{1,1}
\end{smallmatrix}\Bigr)},
\end{gather*}
where $\phi_X\colon X\to X$ for every $X\in\I$, $\alpha_X,\alpha'_X\in\K^{*}$ and $\beta_X,\beta'_X,\gamma_X,\lambda,\mu,\nu,\xi\in \K$. Up to multiplying everything by $\alpha_A^{-1}$, we can assume $\alpha_A=1$. As $f\comp\phi_A=\phi_B\comp f$ and $g\comp\phi_B=\phi_C\comp g$ we must also have $\alpha_B=\alpha_C=1$ and $\beta_A=\beta_B=\beta_C:=\beta$. The conditions $\phi_D\comp f'=f'\comp \phi_B$ and $g''\comp \phi_F
=\sh{\phi_B}\comp g''$ imply $\alpha_D=\alpha'_D=\alpha_F=\alpha'_F=1$.
Since $\tilde{k}'\comp \phi_E=\phi_F\comp k'$ we have $\alpha_E=\alpha'_E=\mu=1$. From $h''\comp\phi_E=\sh{\phi_A}\comp h''$ we get $\beta'_E=\beta$ and $\beta_E=\beta+1$. But this is not possible since 
\begin{equation*}
\phi_E\comp h'=
{\Bigl(\begin{smallmatrix}
y^3_{3,2}+(\beta+1)y^2_{3,2} \\ 
x^0_{3,2}+(\beta+1)x^1_{3,2}
\end{smallmatrix}\Bigr)}
\ne
{\Bigl(\begin{smallmatrix}
y^3_{3,2}+\beta y^2_{3,2} \\ 
x^0_{3,2}+\beta x^1_{3,2}
\end{smallmatrix}\Bigr)
}
=
h'\comp\phi_C.
\end{equation*}
It means there is no isomorphism between the two octahedra, and we are done.

\end{document}